\newtheorem{thm}{Theorem}[section]
\newtheorem{lemma}{Lemma}[section]
\newtheorem{corollary}{Corollary}[section]
\newtheorem{prop}{Proposition}[section]
\newtheorem{rmk}{Remark}[section]
\theoremstyle{plain}
\numberwithin{equation}{section}
\DeclareMathOperator*{\argmin}{argmin}
\newcommand{\numberset}{\mathbb}
\newcommand{\R}{\numberset{R}}
\def\eps{\varepsilon}
\def\a{\alpha}
\def\d{\delta}
\def\l{\lambda}
\def\om{\omega}
\def\de{\partial}
\def\uf{\underline{f}}
\def\M{\mathfrak{M}}
\begin{document}
\title[Eikonal equation with vanishing Lagrangian and Global Optimization]{An Eikonal equation with vanishing Lagrangian arising in Global Optimization}
\thanks{The first author is member of the Gruppo Nazionale per l'Analisi Matematica, la Probabilit\`a e le loro Applicazioni (GNAMPA) of the Istituto Nazionale di Alta Matematica (INdAM). \\
The work of the second author is funded by the Deutsche Forschungsgemeinschaft (DFG, German Research Foundation) – Projektnummer 320021702/GRK2326 – Energy, Entropy, and Dissipative Dynamics (EDDy). The results of this paper are part of his Ph.D. thesis \cite{kouhkouh22PhD} which was conducted when he was a Ph.D. student in University of Padova.
}

\author{Martino Bardi}
\address {Martino Bardi \newline \indent
{Department of Mathematics “T. Levi-Civita”, \newline \indent 
University of Padova, via Trieste, 63},
\newline \indent
{I-35121 Padova, Italy}
}
\email{\texttt{bardi@math.unipd.it}}

\author{Hicham Kouhkouh}
\address{Hicham Kouhkouh \newline \indent
{RWTH Aachen University, Institut f\"ur Mathematik,  \newline \indent 
RTG Energy, Entropy, and Dissipative Dynamics,\newline \indent
Templergraben 55 (111810)},
 \newline \indent 
{52062, Aachen, Germany}
}
\email{\texttt{kouhkouh@eddy.rwth-aachen.de}}

\date{\today}
\begin{abstract}
We show a connection between global unconstrained optimization of a continuous function $f$ and weak KAM theory for an eikonal-type equation arising also in ergodic control.
A solution $v$ of the critical Hamilton-Jacobi equation is built by a small discount approximation as well as the long time limit of an associated evolutive equation. Then $v$ is represented as the value function of a control problem with target, whose optimal trajectories are driven by a differential inclusion describing the gradient descent of $v$. Such trajectories are proved to converge to the set of minima of $f$, using tools in control theory and occupational measures. We prove also that in some cases the set of minima is reached in finite time.
\end{abstract}

\subjclass[MSC]{49L12, 49L25, 35Q93, 90C26}
\keywords{Global optimization, weak KAM theory, exit-time control problem, ergodic Hamilton-Jacobi equation, occupational measures, long time behavior of solutions, eikonal equation, \L ojasiewicz inequality.}
\maketitle

\section{Introduction}
\label{sec: motivation}
Let $f\in C(\R^n)$ be a bounded function attaining the global minimum. Global optimization is concerned with the search of the minimum points, i.e., finding the set $\M=\argmin f$.  
For convex smooth functions this is achieved by the gradient flow, 
 i.e., by following the trajectories of $\dot{y}(s) = -\nabla f(y(s))$ from any initial point $x=y(0)$. However, if the function  $f$ is not convex the trajectory $y(\cdot)$ may converge to a local minimum or a saddle point. Several alternative algorithms have been designed 
  to handle non-convex optimization, such as the stochastic gradient descent,
simulated annealing, or consensus-based methods. 
 In particular the case of non-smooth $f$ in high dimensions is important 
 for the applications to machine learning, see, e.g., the recent paper \cite{carrillo2021consensus} and the references therein.
 
In this paper we construct and study a Lipschitz function $v : \R^n \to \R$ such that the following normalized non-smooth gradient descent differential inclusion
\begin{equation}
\label{grad_desc}
 \dot{y}(s)\in \left\{ - \frac{p}{|p|}\,,\; p\in D^{-}v(y(s))\right\},\; \text{ for a.e. }\, s>0 ,
\end{equation}
has a solution for any initial condition $x=y(0)$ and all solutions converge to $\M$ as $t\to +\infty$. Here $D^{-}v$ is the sub-differential of the theory of viscosity solutions (see, e.g., \cite{bardi2008optimal}). The construction of such a generating function $v$ is based on a classical problem for Hamilton-Jacobi equations: find a constant $c$ such that the stationary equation
\begin{equation}
\label{crit_eq}
H(x, Dv)=c 
\quad \text{in } \R^n 
\end{equation}
has a solution $v$. 
The minimal $c$ with this property is the critical value of the Hamiltonian $H$ and, if $H(x,\cdot)$ is convex, it is also the value of an optimal control problem with ergodic cost 
 having $H$ as its Bellman Hamiltonian. If the critical solution $v$ is interpreted in the viscosity sense, the problem fits in the weak KAM theory, and it is well-known that, for $H=\frac{1}{2}|p|^2 - f(x)$  with  $f$ periodic, $c=-\min f$ \cite{lions1987homogenization, fathi2008weak}; moreover the same holds for any bounded $f\in C^2(\R^n)$ by a result of Fathi and Maderna \cite{fathi2007weak},  { and for uniformly continuous $f$ as proved by Barles and Roquejoffre \cite{barles2006ergodic}. In Section \ref{sec: asymptotics} we extend such result to 
  $f\in C(\R^n)$, 
  bounded, and attaining its minimum}. We also prove that $\min f$ and $v$ solving the critical equation
\begin{equation*}
   \min f 
    + \frac{1}{2}|\nabla v(x)|^{2} = f(x) \quad \text{ in } \mathds{R}^{n}
\end{equation*}
can be approximated in two ways: by the solution of the stationary equation
\begin{equation}
\label{eq: u_l}
    \lambda u_{\lambda} + \frac{1}{2}|Du_{\lambda}|^{2} = f(x),\quad x\in\mathds{R}^{n},
\end{equation}
as $\l\to 0+$, the so-called small discount limit, as well as by the long-time limit of the solution of the evolution equation
\begin{equation}
\label{eq: evol}
\partial_{t}u
  + \frac{1}{2}|Du
  |^{2} =\, f(x),\; \text{ in }\;\mathds{R}^n \times (0,+\infty), \quad
    u(x,0) = 0. 
\end{equation}
More precisely, for the evolutive equation \eqref{eq: evol} we prove
\begin{equation}
\label{longtime}
\lim_{t\to+\infty}\left(u(x,t) -  t \min f\right)  = v(x) \quad \text{locally uniformly in } \mathds{R}^{n}.
\end{equation}
Note that the two problems \eqref{eq: u_l} and \eqref{eq: evol} do not require the a-priori knowledge of $\min f$ and $\argmin f$.  { If, in addition, $f$ is  Lipschitz and semiconcave, we show that $v$ is semiconcave and } $Du_\l$ and $D_xu(\cdot,t)$ both converge (a.e.) to $Dv$, therefore giving an approximation of the gradient descent equation \eqref{grad_desc}.
Moreover, in this case  \eqref{grad_desc} becomes the classical normalised gradient descent
\[
  \dot{y}(t)= - \frac{Dv(y(t))}{|Dv(y(t))|},\quad \forall \,t>0 .
\]


The main result of the paper is the convergence of the gradient descent trajectories \eqref{grad_desc} to the set $\M$ of minima of $f$. This is done in Section \ref{sec: prob target} after observing that $v$ solves also the Dirichlet problem for the eikonal equation
\begin{equation}
\label{eq: motivation - Dirichlet prob}
\left\{\quad
\begin{aligned}
     |\nabla v(x)| &= \ell(x), & x\in \mathds{R}^{n}\setminus \mathfrak{M}\\
     v(x) &= 0, & x\in \mathfrak{M}
\end{aligned}\right.
\end{equation}
with $\ell(x) := \sqrt{2(f(x) - \min f)}$. 
 (In fact, our analysis of this problem requires only that $\ell\in C(\R^n)$ is bounded, non-negative, and $\M=\{x : \ell(x)=0\}$).
We exploit that the unique solution of \eqref{eq: motivation - Dirichlet prob} is the value function
\begin{equation*}
    v(x)=\inf\limits_{\alpha({\cdot})
    }\int_{0}^{t_{x}(\alpha)} \ell(y_{x}^{\alpha}(s))\,\text{d}s , \quad \dot{y}^{\alpha}_{x}(s) = \alpha(s) ,\,
    \text{ for  } s > 0,\quad y_{x}^{\alpha}(0)=x,
\end{equation*}
where $\a$ is measurable, $|\alpha(s)|\leq 1$, and $t_{x}(\alpha)$ is the first time the trajectory $y_{x}^{\alpha}$ hits $\M$. 
We show that optimal trajectories exist, satisfy the gradient descent inclusion \eqref{grad_desc}, and tend to $\M$ as $t\to +\infty$ under a slightly strengthened positivity condition { at infinity for }$\ell$. A crucial new tool for the proof are the occupational measures associated to these { trajectories}.

{ In the final section of the paper we give sufficient conditions such that 
the optimal trajectories reach $\M$ in finite time. This is a nontrivial problem even when $v$ is smooth, because it is equivalent to the finite length of gradient orbits $\dot z(s)=-Dv(z(s))$, a question with a very large literature and open problems, see, e.g., \cite{bolte2007lojasiewicz, daniilidis2010asymptotic} and the references therein. Here we prove the finite hitting time by assuming a bound from below on $\ell$ near the target and showing an inequality of \L ojasiewicz type along optimal trajectories.}

In a forthcoming companion paper we also study the 
 approximation of $v$ and $\M$ by vanishing viscosity. We add to \eqref{eq: u_l} a term $-\eps \Delta u_\l$ and let $\l\to 0+$ to get the viscous critical equation
 \begin{equation*}
U^\eps - \eps \Delta v^\eps(x) 
    + \frac{1}{2}|\nabla v^\eps(x)|^{2} = f(x) \quad \text{ in } \mathds{R}^{n} ,
\end{equation*}
where $U^\eps$ is a constant. We prove that $0\leq U^\eps - \min f \leq C\eps^\beta$ for some $\beta>0$. Then we define the approximate stochastic gradient descent
\begin{equation*}
    \text{d}X
    _{s} = -\nabla u
    _{\lambda}(X
    _{s})\,\text{d}s + \sqrt{2\varepsilon}\,\text{d}W_{s},
\end{equation*}
and show that the trajectories converge 
to $\M$ in a suitable sense, for small $\l$ and $\eps$. These results can be found also in the second author's thesis \cite{kouhkouh22PhD}.

Note that \eqref{eq: evol} is the classical Hamilton-Jacobi equation with the mechanical Hamiltonian $H(x,p)=\frac 12 |p|^2-f(x)$, where $-f$ is the potential energy. Then our results of Section \ref{sec: asymptotics} have an  interpretation in analytical mechanics. For instance, the long-time behavior \eqref{longtime} describes 
 a thermodynamical trend to equilibrium in a non-turbulent gas or fluid: see  \cite{cardin2008fluid, cardin2015elementary}.

We do not attempt to review all the literature related to the topics mentioned above. 
For weak KAM theory 
 on compact manifolds we refer to \cite{fathi1997theoreme, fathi2008weak, evans2004survey}, and for the PDE approach to ergodic control, mostly under periodicity assumptions, 
 the reader can consult \cite{arisawa1998ergodicsto, alvarez2010ergodicity} and the references therein. When the state space is not bounded one must add 
 conditions to get some compactness. In addition to \cite{fathi2007weak, barles2006ergodic} already quoted, such problems were studied in all $\R^n$ by \cite{artstein2000value, motta2015asymptotic, nguyen2016singularly, cannarsa2020asymptotic, cannarsa2022asymptotic, ishii2020vanishing} assuming that $f$ is large enough at infinity, and by \cite{fujita2006asymptotic, ishii2008asymptotic, kaise2009ergodic} for equations involving a linear first order term that satisfies a recurrence condition, { see also the references therein}. Here, instead,  we get compactness from the { boundedness} 
  of $f$ { and the assumption that its minimum is attained}. Several of the results just quoted were used for homogenisation and singular perturbation problems, e.g., \cite{lions1987homogenization, artstein2000value, alvarez2010ergodicity, nguyen2016singularly}, so we believe that also our results will have such applications.
 
 The Dirichlet problem \eqref{eq: motivation - Dirichlet prob} with $\ell$ 
  vanishing at the boundary was studied, e.g., in \cite{soravia1999optimality, malisoff2004bounded, motta2015value}. The case of a cost that does not vanish is part of time-optimal control and it is treated 
  in \cite{bardi2008optimal}, see also the references therein. The synthesis of an optimal  feedback from the value function $v$ leading to \eqref{grad_desc} uses method from \cite{bardi2008optimal} based on the earlier papers \cite{berkovitz1989optimal, frankowska1989optimal}.

We do not try here to design algorithms for global optimization based on the previous results. Let us mention, however, that { an } 
 efficient numerical method for computing at the same time $c$ and $v$ in the critical/ergodic PDE \eqref{crit_eq} was proposed in  \cite{cacace2016generalized}.

The paper is organized as follows. In 
Section \ref{sec: stationary} we prove the weak KAM theorem by the small discount approximation \eqref{eq: u_l} and in Section \ref{sec: evolutive} we study the long-time asymptotics of solutions to \eqref{eq: evol}. Section \ref{sec: prob target} is devoted to the optimal control problem with target $\M$ associated to \eqref{eq: motivation - Dirichlet prob} and Section \ref{sec:  diff inclusion}  to deriving the gradient descent inclusion \eqref{grad_desc} for the optimal trajectories. In Section \ref{sec:  conv traj} we prove that such trajectories converge to $\M$, 
and in Section \ref{sec: finite time} we show { two cases} where the hitting time is finite.

\section{A weak KAM theorem and approximation of the critical solution} 
\label{sec: asymptotics}

We introduce the following assumptions and refer to them wherever it is needed:
\textit{Assumptions \textbf{(A)}}
\begin{enumerate}[label=\textbf{(A\arabic*)}]
\item $f : \R ^n\to\R$ is 
     continuous and
    \begin{equation}
    \label{bounds f}
    \exists\;\underline{f},\,\overline{f}\; \text{s.t. }\; \underline{f} \leq f(x) \leq \overline{f},\quad \forall\;x\in \mathds{R}^n,
    \end{equation}
\item $f$ attains the minimum, i.e.,  
  \begin{equation}\label{minimizers}
    \mathfrak{M}:=\{x\in\mathds{R}^{n}\,:\, f(x) = \underline{f}:= \min\limits_{z\in\mathds{R}^{n}}f(z)\} \ne \emptyset 
\end{equation}
\end{enumerate}
\textit{Assumptions \textbf{(B)}}
\begin{enumerate}[label=\textbf{(B\arabic*)}]
\item $f$ is $C_{1}$-Lipschitz continuous, i.e. $C_{1}= 
\|\nabla f\|_\infty$,
    \item $f$ is $C_{2}$-semiconcave, i.e., $D^{2}_{\xi\xi}f \leq C_{2}$ a.e. for all $\xi\in\mathds{R}^{n}$ s.t. $|\xi|=1$, where $D^{2}_{\xi\xi}f$ is the second order derivative of $f$ in the direction $\xi$.
\end{enumerate}

A weak KAM theorem for the Hamiltonian $H(x,p)=\frac{1}{2}|p|^2 - f(x)$ should give conditions under which there exists a constant $U\in\R$, the (Man\'e) critical value, such that the equation
\begin{equation}\label{critical}
   U 
    + \frac{1}{2}|\nabla v(x)|^{2} = f(x),\quad \text{ in } \mathds{R}^{n}.
\end{equation}
has a viscosity solution $v$. Clearly any critical value must satisfy $U\leq \uf$. In this section we prove under the current assumptions that $\uf$ is a critical value and construct the solution $v$ by two different approximation procedures, both 
having an interpretation in terms of ergodic problems in optimal control. 

The fact that $\uf$ is the maximal critical value was proved in \cite{fathi2007weak} for $f\in C^{2}$ 
and with $\R^n$ replaced by any complete Riemannian manifold, by methods of weak KAM theory different form ours.


\subsection{The small discount limit}
\label{sec: stationary}
We consider the stationary approximation of \eqref{critical}
\begin{equation}
\label{eq: u lambda - 1 order}
    \lambda u_{\lambda} + \frac{1}{2}|Du_{\lambda}|^{2} = f(x),\quad x\in\mathds{R}^{n},
\end{equation}
where $\l >0$ will be sent to $0$. The viscosity solution $u_\l$ is known to be the value function 
of the following infinite horizon discounted optimal control problem 
\begin{equation}\label{eq: u lambda}
\begin{aligned}
    u_{\lambda}(x) = \inf\limits_{\alpha_{\cdot}} \;
    & 
    J(x,\alpha_{\cdot}):=\int_{0}^{+\infty}\left(\frac{1}{2}|\alpha_{t}|^{2}+f(x(t))\right)e^{-\lambda t}\,\text{d}t,\\
    & \text{s.t. }\; \dot{x}(s) = \alpha_{s},\quad x(0)=x\in\mathds{R}^n,\quad s\geq 0
\end{aligned}
\end{equation}
where the controls $\alpha.:[0,+\infty)\to \mathds{R}^{n}$ are measurable function 
(see, e.g., 
\cite[Chapter III]{bardi2008optimal}).    
The main result of this section is the following.
\begin{thm}\label{thm: ergodic 2}
Under assumptions (A), as $\lambda\to 0$
\begin{equation*} 
    \lambda u_{\lambda}(x) \to \underline{f} \quad and \quad
             u_{\lambda}(x) -  \uf\l^{-1}  \to v(x) \quad \text{locally uniformly in } \mathds{R}^{n} ,
\end{equation*}
where $v(\cdot)$ is a Lipschitz continuous viscosity solution to
\begin{equation}
\label{eq: ergodic pde}
    \underline{f} + \frac{1}{2}|Dv(x)|^{2} = f(x),\quad x\in \mathds{R}^n .
\end{equation}
Moreover $v\geq 0$ in $\R^n$ and null on $\M$, and it is the unique viscosity solution of \eqref{eq: ergodic pde} in $\R^n\setminus\M$ vanishing on $\de\M$ and bounded from below.\\
If we assume moreover that assumptions (B) hold, then
\begin{equation*} 
 Du_{\lambda}(x)\to Dv(x)  \quad a.e. 
\end{equation*}
\end{thm}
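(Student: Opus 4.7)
The plan is to renormalize $u_\lambda$ so it vanishes on $\mathfrak{M}$, derive a uniform Lipschitz bound, extract a subsequential limit $v$, identify $v$ via a uniqueness theorem for an eikonal Dirichlet problem, and finally pass to the gradients using the semiconcavity available under (B).

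\textbf{Stage 1 (construction of $v$).} Comparison of \eqref{eq: u lambda - 1 order} with the constant sub- and supersolutions $\underline{f}/\lambda$ and $\overline{f}/\lambda$ gives $\underline{f}/\lambda \leq u_\lambda \leq \overline{f}/\lambda$ on $\R^n$, and at any $x_0 \in \mathfrak{M}$ (nonempty by (A2)) the stationary trajectory $\alpha \equiv 0$ in \eqref{eq: u lambda} attains cost exactly $\underline{f}/\lambda$, so $u_\lambda \equiv \underline{f}/\lambda$ on $\mathfrak{M}$. Set $w_\lambda := u_\lambda - \underline{f}/\lambda \geq 0$; it vanishes on $\mathfrak{M}$ and satisfies
\begin{equation*}
\lambda w_\lambda + \tfrac12 |Dw_\lambda|^2 = f - \underline{f} \quad \text{in } \R^n.
\end{equation*}
Since $\lambda w_\lambda \geq 0$, $w_\lambda$ is a viscosity subsolution of $\tfrac12|Dw|^2 \leq \overline{f} - \underline{f}$, whence the uniform Lipschitz bound $\|Dw_\lambda\|_\infty \leq \sqrt{2(\overline{f}-\underline{f})}$. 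Together with $w_\lambda|_\mathfrak{M} \equiv 0$ this makes $\{w_\lambda\}$ locally equi-bounded, and Arzelà-Ascoli extracts a subsequence $w_{\lambda_n} \to v$ locally uniformly, with $v$ Lipschitz, $v \geq 0$, $v|_\mathfrak{M} \equiv 0$. Stability of viscosity solutions, combined with $\lambda w_\lambda \to 0$ locally uniformly, then gives that $v$ satisfies \eqref{eq: ergodic pde}.

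\textbf{Stage 2 (uniqueness and full convergence).} In $\R^n \setminus \mathfrak{M}$ the critical equation reads as the eikonal PDE $|Dv|^2 = 2(f - \underline{f})$ with Dirichlet datum $v = 0$ on $\partial \mathfrak{M}$ and $v$ bounded from below. Comparison for such eikonal problems with right-hand side vanishing on the target, in the class of bounded-below viscosity solutions, is known (see, e.g., \cite{soravia1999optimality, malisoff2004bounded, motta2015value}), so $v$ is uniquely determined; every subsequential limit of $\{w_\lambda\}$ thus agrees with this $v$, and the full family converges locally uniformly. Since $w_\lambda$ stays locally bounded, $\lambda u_\lambda = \underline{f} + \lambda w_\lambda \to \underline{f}$ locally uniformly as well.

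\textbf{Stage 3 (gradient convergence under (B)).} Under (B), $v$ inherits semiconcavity via its representation as the value function of the exit-time problem with Lipschitz semiconcave running cost $\ell := \sqrt{2(f-\underline{f})}$; in particular $D^+v(x) = \{Dv(x)\}$ at a.e.\ $x$. At any $x$ of common differentiability of $v$ and $w_\lambda$, the PDEs yield the pointwise identity $|Dw_\lambda(x)|^2 = 2(f(x)-\underline{f}) - 2\lambda w_\lambda(x) \to |Dv(x)|^2$ as $\lambda \to 0$. Uniform convergence $w_\lambda \to v$ plus the equi-Lipschitz bound give weak-$*$ convergence $Dw_\lambda \rightharpoonup Dv$ in $L^\infty_{loc}$, and dominated convergence of the $L^2_{loc}$-norms of $Dw_\lambda$ then forces strong $L^2_{loc}$ convergence $Dw_\lambda \to Dv$. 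The upgrade to a.e.\ convergence of the full family exploits single-valuedness of $D^+v$ a.e.\ together with the uniform Lipschitz bound on $w_\lambda$, which leaves $\{Dv(x)\}$ as the only admissible cluster point of $\{Dw_\lambda(x)\}$ at a.e.\ $x$; since $Du_\lambda = Dw_\lambda$, this concludes. The main difficulty here is that $\{u_\lambda\}$ is \emph{not} equi-semiconcave (a naive use of the same control for nearby initial data in \eqref{eq: u lambda} gives a semiconcavity constant degenerating like $O(C_2/\lambda)$), so standard Cannarsa-Sinestrari stability of super-differentials is unavailable and one must instead route the argument through the algebraic gradient-norm identity from the PDE and the semiconcavity of the \emph{limit} $v$ alone.
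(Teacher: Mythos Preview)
Stages 1 and 2 match the paper's argument and are correct.

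Stage 3 rests on a misconception. You assert that $\{u_\lambda\}$ is \emph{not} equi-semiconcave because the naive control argument gives a constant $O(C_2/\lambda)$; that is true of the naive argument, but the conclusion is false. Under (B) the family \emph{is} uniformly semiconcave, and this is precisely Lemma~\ref{lem: semiconcavity - lambda} in the paper, proved by a Bernstein-type estimate on the vanishing-viscosity approximation: differentiating $\lambda u_\lambda^\varepsilon - \varepsilon \Delta u_\lambda^\varepsilon + \tfrac12|Du_\lambda^\varepsilon|^2 = f$ twice in a direction $\xi$ produces a quadratic absorbing term $\omega_\lambda^2 \leq |D_\xi Du_\lambda^\varepsilon|^2$ which yields the $\lambda$-free bound $\omega_\lambda \leq \sqrt{2(C_1+C_2)+1}$ after localization. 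With uniform semiconcavity in hand, the a.e.\ gradient convergence is a direct citation of \cite[Theorem~3.3.3]{cannarsa2004semiconcave}, and no $L^2$ detour is needed.

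Your proposed workaround also has independent gaps. The claim that $\ell = \sqrt{2(f-\underline f)}$ is ``Lipschitz semiconcave'' is generally false: near $\mathfrak{M}$ it is only H\"older-$\tfrac12$ (e.g.\ $f(x)=|x|^2$ gives $\ell(x)=\sqrt{2}\,|x|$, which is neither Lipschitz nor semiconcave at the origin), so the standard semiconcavity theory for exit-time value functions does not apply; in the paper, semiconcavity of $v$ is obtained precisely as the limit of the uniformly semiconcave $u_\lambda$. And even granting $v$ semiconcave, strong $L^2_{\mathrm{loc}}$ convergence of $Dw_\lambda$ yields a.e.\ convergence only along subsequences; your cluster-point claim is unsupported, since the PDE gives $|Dw_\lambda(x)|\to|Dv(x)|$ (fixing only the modulus of any limit point) but nothing pins down the direction, and without equi-semiconcavity of the $w_\lambda$ themselves there is no mechanism forcing cluster points of $Dw_\lambda(x)$ into $D^+v(x)$.
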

For the proof we need some estimates uniform in $\l$. The first Lemma is known and we omit the proof (see \cite{kouhkouh22PhD} for the details).

\begin{lemma}
\label{lem: bound lambda u}
Under the assumption (A1),  for all $x\in \mathds{R}^n$ and $\lambda>0$,
\begin{equation}
\label{ufof}
    \underline{f}\;\leq\; \lambda u_{\lambda}(x)\;\leq \; \overline{f}
\end{equation}
\begin{equation}
\label{eq: unif grad est _ proof - stat}
    |Du_{\lambda}(x)| \leq \sqrt{4\|f\|_{\infty}} \quad \text{a.e.} .
\end{equation}
\end{lemma}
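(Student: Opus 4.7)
The plan is to prove both estimates by direct arguments rooted in the optimal control representation \eqref{eq: u lambda}. The bounds on $\lambda u_\lambda$ follow immediately from the control formulation: the lower bound $\lambda u_\lambda(x) \geq \uf$ comes from $\frac12|\alpha_t|^2 \geq 0$ together with $f \geq \uf$, so that $J(x, \alpha_\cdot) \geq \uf/\lambda$ for every admissible control; the upper bound $\lambda u_\lambda(x) \leq \of$ comes from inserting the admissible constant control $\alpha \equiv 0$, which keeps the state at $x$ and yields $J(x, 0) = f(x)/\lambda \leq \of/\lambda$.

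For the gradient estimate I would first establish that $u_\lambda$ is globally Lipschitz on $\R^n$ with constant at most $\sqrt{4\|f\|_\infty}$, uniformly in $\lambda > 0$. A direct way is the standard ``steer-then-follow'' comparison: given $x, y \in \R^n$, pick an $\eps$-optimal control $\alpha^*$ for the initial point $x$ and build an admissible control for starting point $y$ by using the constant velocity $\alpha(s) = (x-y)/\tau$ on $[0,\tau]$ (so the state reaches $x$ at time $\tau$) and appending the time-shifted $\alpha^*$ afterwards. A direct computation gives $J(y,\alpha) = \frac{|x-y|^2}{2\tau} + \int_0^\tau f(\cdot)e^{-\lambda s}\,ds + e^{-\lambda\tau}J(x,\alpha^*)$. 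Using $|\lambda u_\lambda|\leq \|f\|_\infty$ to handle the term $(e^{-\lambda\tau}-1)u_\lambda(x)$ via $1 - e^{-\lambda\tau} \leq \lambda\tau$, one obtains
\[
u_\lambda(y) - u_\lambda(x) \leq \frac{|x-y|^2}{2\tau} + 2\tau\|f\|_\infty + \eps,
\]
and choosing $\tau = |x-y|/(2\sqrt{\|f\|_\infty})$ (the AM-GM optimum) and letting $\eps \to 0$ gives the Lipschitz bound with constant $2\sqrt{\|f\|_\infty} = \sqrt{4\|f\|_\infty}$. Once $u_\lambda$ is Lipschitz, Rademacher's theorem ensures it is differentiable a.e., and at each differentiability point the equation \eqref{eq: u lambda - 1 order} holds in the classical sense, so that $\frac12|Du_\lambda|^2 = f - \lambda u_\lambda \leq \|f\|_\infty - \uf \leq 2\|f\|_\infty$, yielding the a.e. bound $|Du_\lambda| \leq \sqrt{4\|f\|_\infty}$.

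I do not anticipate essential difficulties: both statements are classical for infinite-horizon discounted problems with quadratic kinetic cost and bounded running cost. The only mild point to track is the uniformity in $\lambda$, which is transparent in both arguments since the constants involved depend only on $\|f\|_\infty$ (equivalently on $\uf$ and $\of$). A purely PDE-based alternative is also available: from the viscosity subsolution inequality $\lambda u_\lambda(x_0) + \frac12|D\varphi(x_0)|^2 \leq f(x_0)$ for any $C^1$ test function $\varphi$ touching $u_\lambda$ from above at $x_0$, combined with the already established bound $\lambda u_\lambda \geq \uf$, one obtains $|D\varphi(x_0)|^2 \leq 2(\|f\|_\infty - \uf) \leq 4\|f\|_\infty$, from which both the Lipschitz continuity of $u_\lambda$ and the a.e. gradient bound follow by standard viscosity-solution techniques.
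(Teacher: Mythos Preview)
Your proposal is correct. The paper itself omits the proof of this lemma, stating only that it is known and referring to \cite{kouhkouh22PhD} for details, so there is nothing to compare against; your arguments are standard and sound, and the constants match the statement.

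One minor remark: in your final paragraph you present the PDE route as an alternative, but in fact once you have established the Lipschitz bound $2\sqrt{\|f\|_\infty}$ via the steer-then-follow argument, Rademacher already gives $|Du_\lambda|\leq 2\sqrt{\|f\|_\infty}$ a.e.\ directly, without invoking the equation at differentiability points. The PDE step is thus redundant here (though it does give the slightly sharper intermediate bound $|Du_\lambda|^2\leq 2(f-\lambda u_\lambda)$). Either route is fine.
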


\begin{lemma}\label{lem: semiconcavity - lambda}
Assume (A) and (B) hold. Then $u_{\lambda}$ is $\widetilde{C}_{3}-$semiconcave, where $\widetilde{C}_{3}$ is a positive constant independent of $\lambda> 0$.
\end{lemma}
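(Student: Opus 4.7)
The plan is to prove semiconcavity of $u_\lambda$ by the classical variational perturbation technique for value functions (see, e.g., \cite{cannarsa2004semiconcave}). Fix $x\in\mathds{R}^n$ and $h\in\mathds{R}^n$; the goal is the bound
\[
u_\lambda(x+h)+u_\lambda(x-h)-2u_\lambda(x) \leq \widetilde{C}_{3}\,|h|^{2}
\]
with $\widetilde{C}_{3}$ independent of $\lambda>0$. The naive strategy of using a single $\eps$-optimal control $\alpha^{*}$ for $u_\lambda(x)$ at all three initial conditions fails: the resulting trajectories $x^{*}(s)\pm h$ and $x^{*}(s)$ never merge, and the semiconcavity of $f$ (assumption (B2)) then produces only a bound proportional to $\int_{0}^{+\infty} e^{-\lambda s}\,ds=1/\lambda$, which diverges as $\lambda\to 0$.

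The key idea to achieve $\lambda$-uniformity is to modify the control on a fixed finite interval so as to force the perturbed trajectories to rejoin $x^{*}(s)$. Given an $\eps$-optimal $\alpha^{*}$ for $u_\lambda(x)$ and a parameter $T>0$ to be optimized later, I set
\[
\alpha_{s}^{\pm} := \alpha_{s}^{*} \mp \frac{h}{T}\,\mathbf{1}_{[0,T]}(s),
\]
so the trajectories starting at $x\pm h$ read $y^{\pm}(s)=x^{*}(s)\pm h(1-s/T)$ on $[0,T]$ and $y^{\pm}(s)=x^{*}(s)$ for $s\geq T$. All extra costs are thereby confined to the fixed interval $[0,T]$, which is what will yield estimates independent of $\lambda$.

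Summing $J(x+h,\alpha^{+})+J(x-h,\alpha^{-})$, the linear cross-term $\pm(h/T)\cdot\alpha_{s}^{*}$ cancels between the two problems, so no pointwise bound on $\alpha^{*}$ is required. The extra kinetic contribution is at most $\int_{0}^{T}(|h|^{2}/T^{2})e^{-\lambda s}\,ds\leq |h|^{2}/T$, and by (B2) applied to $x^{*}(s)\pm h(1-s/T)$ the extra potential contribution is at most $C_{2}\int_{0}^{T}|h|^{2}(1-s/T)^{2}e^{-\lambda s}\,ds\leq C_{2}|h|^{2}T/3$. Using $J(x,\alpha^{*})\leq u_\lambda(x)+\eps$ gives
\[
u_\lambda(x+h)+u_\lambda(x-h)-2u_\lambda(x) \leq \frac{|h|^{2}}{T}+\frac{C_{2}|h|^{2}T}{3}+2\eps .
\]
Choosing $T=\sqrt{3/C_{2}}$ and letting $\eps\to 0^{+}$ yields $\widetilde{C}_{3}=2\sqrt{C_{2}/3}$, independent of $\lambda$. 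The main obstacle is precisely the $\lambda$-uniformity: once one realizes that the trajectories must be forced to coincide with $x^{*}$ after a $\lambda$-independent horizon and exploits the cancellation of the linear cross-term, the rest is a routine AM--GM balance between the kinetic penalty $|h|^{2}/T$ and the potential penalty $C_{2}|h|^{2}T$.
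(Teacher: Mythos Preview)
Your argument is correct and is a genuinely different route from the paper's. The paper proceeds by a PDE Bernstein method: it introduces the viscous approximation $\lambda u_\lambda^\varepsilon-\varepsilon\Delta u_\lambda^\varepsilon+\tfrac12|Du_\lambda^\varepsilon|^2=f$, differentiates twice in a fixed direction $\xi$, and studies the maximum of $\omega_\lambda:=D^2_{\xi\xi}u_\lambda^\varepsilon$ using the penalised function $\omega_\lambda(x)-\beta\log(1+|x|^2)$; the Lipschitz bound (B1) enters to control the first-order term $Du_\lambda^\varepsilon\cdot D\omega_\lambda$, and one ends up with $\widetilde C_3=\sqrt{2(C_1+C_2)+1}$ after letting $\varepsilon\to 0$. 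Your variational argument, by contrast, works directly on the control formulation, never touches the viscous regularisation, and uses only (B2): the cancellation of the linear cross-term means no bound on $\alpha^*$ (hence no (B1)) is needed, and the resulting constant $2\sqrt{C_2/3}$ is sharper. The trade-off is that the Bernstein approach generalises more readily to Hamiltonians without a clean control representation and ties in with the viscous analysis the authors develop elsewhere, while your approach is the shorter and more self-contained proof for the specific quadratic Hamiltonian at hand.
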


\begin{proof} We will skip the more standard parts and refer to
\cite{kouhkouh22PhD} for the complete details.
We use the vanishing viscosity approximation
\begin{equation}
\label{eq: pde u lambda}
    \lambda u^{\varepsilon}_{\lambda} - \varepsilon \Delta u^{\varepsilon}_{\lambda} + \frac{1}{2}|Du^{\varepsilon}_{\lambda}|^{2} = f(x),\quad x\in\mathds{R}^{n}.
\end{equation}
%
%
We fix $\xi\in\mathds{R}^{n}$ such that $|\xi| = 1$ and denote $\omega_{\lambda} (x) := D^{2}_{_{\xi\xi}}u^{\varepsilon}_{\lambda}(x)$  the second order derivative in the direction $\xi$. 
The estimates $\omega_{\lambda}(x)\leq \lambda^{-1}C_{2}$ and 
\begin{equation}
\label{eq: Du lambda}
    |Du^{\varepsilon}_{\lambda}(x)|\leq \lambda^{-1}C_{1} 
\end{equation}
are standard and can be got, for instance, by representing $u^{\varepsilon}_{\lambda}$ as the value function of the stochastic infinite-horizon discounted optimal control problem associated to \eqref{eq: pde u lambda} and exploiting the $C_2$-semiconcavity and $C_1$-Lipschitz continuity of $f$.

Next we differentiate twice \eqref{eq: pde u lambda} in the direction of $\xi$ and obtain 
\begin{equation*}
    - \varepsilon\Delta \omega_{\lambda} + Du^{\varepsilon}_{\lambda}\cdot D\omega_{\lambda} + |D_{_{\xi}}Du^{\varepsilon}_{\lambda}|^{2} + \lambda \omega_{\lambda} = D^{2}_{_{\xi\xi}}f,\quad \text{in } \mathds{R}^n.
\end{equation*}
By $\omega_{\lambda}^{2}\leq |D_{_{\xi}}D u_{\lambda}|^{2}$ and  the semiconcavity assumption  $D^{2}_{_{\xi\xi}}f \leq C_{2}$ we get 
\begin{equation}
\label{eq: om_pf_lambda}
    - \varepsilon\Delta \omega_{\lambda} + Du^{\varepsilon}_{\lambda}\cdot D\omega_{\lambda} + \omega_{\lambda}^{2} + \lambda \omega_{\lambda} \leq  C_{2},\quad \text{in } \mathds{R}^n.
\end{equation}
In the case $\om_\l$ attains its maximum at some $\bar x$ we have
\[
\om_\l^2(\bar x) + \l\om_\l(\bar x) \leq C_2 .
\]
By the elementary inequality $\frac{1}{2}\left( z^{2} - \lambda^{2}\right) \leq z^{2} + \lambda z$ we get, for $\l\leq 1$,
\[
\om_\l^2(\bar x) \leq 2C_2 +1
\]
and then we easily reach the conclusion. For the general case we set, for $\beta>0$ to be chosen,
\begin{equation*}
 \Psi_{\lambda}(x):= \omega_{\lambda}(x) - \beta \log(1+|x|^{2}) .
\end{equation*}
Since $\omega_{\lambda}$ is bounded from above, 
$\Psi_{\lambda}$ attains a global maximum in $\mathds{R}^n$, say at $\overline{x}$ 
(which depends on $\lambda$ and $\beta$). By evaluating \eqref{eq: om_pf_lambda} in $\overline{x}$, 
after some calculations and using the bound 
\eqref{eq: Du lambda} we arrive at 
\begin{equation*}
    \omega_{\lambda}^{2}(\overline{x}) + \lambda\omega_{\lambda}(\overline{x}) \leq C_{2} + 2\varepsilon\beta n + 2\beta \lambda^{-1} C_{1}
\end{equation*}
Arguing as above we get, for $\beta\leq\l/2\leq 1$,
\begin{equation}
\label{eq: om_lambda_C3}
    \omega_{\lambda}(\overline{x})^{2} \leq 2(C_{1} + C_{2} + 2\varepsilon n) + 1 .
\end{equation}
Now we 
claim that
\begin{equation*}
    \omega_{\lambda}(x) \leq C_{3} \coloneqq \sqrt{2(C_{1} + C_{2} + 2\varepsilon\,n) + 1},\quad \text{for all }\, x\in \mathds{R}^n.
\end{equation*}
To prove the claim we suppose by contradiction there exists $y\in \mathds{R}^{n}$ such that $\omega_{\lambda}(y,s) - C_{3}=: \d >0$.
Denote $g(x) := \log(1+|x|^{2})$ and choose $\beta>0$ small enough such that 
$\beta g(y)\leq \frac{\delta}{2}$. 
Then 
\begin{equation*}
    0< \frac{\delta}{2}\leq \delta-\beta g(y) = \omega_{\lambda}(y) - \beta g(y) - C_{3} = \Psi_{\lambda}(y) - C_{3}
\end{equation*}
and hence $\Psi_{\lambda}(\overline{x}) - C_{3} >0$. 
On the other hand \eqref{eq: om_lambda_C3} gives
  $ \omega_{\lambda}(\overline{x})\leq C_{3}$
and 
\begin{equation*}
    \Psi_{\lambda}(\overline{x}) - C_{3} \leq -\beta g(\overline{x}) \leq 0 ,
\end{equation*}
which is the desired  contradiction. 
This proves the claim and the $C_3$-semiconcavity of $u^\eps_{\lambda}$, uniformly in $\lambda$, 
for every $0<\varepsilon\leq 1$. 
Finally we let $\eps \to 0$ in \eqref{eq: pde u lambda}  and get that the solution $u_{\lambda}$ to \eqref{eq: u lambda - 1 order} is semi-concave with constant $\widetilde{C}_{3}\coloneqq\sqrt{2(C_{1} + C_{2}) + 1}$.
\end{proof}

%
%

\begin{proof}{(Theorem \ref{thm: ergodic 2})}
First we claim 
 that $\lambda u_{\lambda}(\bar{x})=\underline{f}$  if $\bar x\in \M$ (i.e., $f(\bar
{x})=\underline{f}= \min f$), for all $\lambda>0$. 
In fact, for such $\bar x$,
 \begin{equation*}
    u_{\lambda}(\overline{x}) = \inf\limits_{\alpha_{\cdot}}\int_{0}^{+\infty}\left(\frac{1}{2}|\alpha_{t}|^{2} + f(x(t))\right)e^{-\lambda\,t} \, \text{d}t \leq \int_{0}^{+\infty}f(\overline{x})e^{-\lambda\,t}\,\text{d}t = \uf\l^{-1} ,
\end{equation*}
where the 
 inequality follows from the choice $\alpha_{\cdot}\equiv 0$. 
 The other inequality $\geq$ is true for all $x\in\mathds{R}^{n}$ by Lemma \ref{lem: bound lambda u}, so the claim is proved.

Now we denote $R\coloneqq \sqrt{4\|f\|_{\infty}}$ and use the gradient bound  \eqref{eq: unif grad est _ proof - stat} to get 
\[
|\l u_{\lambda} (x)-\uf|\leq \l R \,\text{dist}(x, \M)  \quad \forall \, x\in\R^n . 
\]
Then
$\lambda u_{\lambda}(x) \to \underline{f}$ 
{locally uniformly}.

Define 
 $\varphi_{\lambda}(\cdot) := u_\lambda(\cdot) - \uf \l^{-1}\geq 0$ 
 and use \eqref{eq: unif grad est _ proof - stat} to get, 
  for all $x,y\in\mathds{R}^n$,
\begin{equation}
\label{eq: ineq proof thm ergodic 2}
        |\varphi_{\lambda}(x)|  \leq R \,\text{dist}(x, \M) , 
        \quad  \quad 
        |\varphi_{\lambda}(x)-\varphi_{\lambda}(y)|  \leq R\,|x-y|.
\end{equation}
Hence, $\{\varphi_{\lambda}(\cdot)\}_{\lambda\in(0,1)}$ is a uniformly bounded and equi-continuous family on any ball of $\mathds{R}^n$. So we can choose a sequence $\lambda_{k}\to 0$ 
as $k\to +\infty$, 
such that $\varphi_{\lambda_{k}}(\cdot)\to v(\cdot)\in C(\mathds{R}^n)
$ locally uniformly. 
Plugging $\varphi_{\lambda}$ in     \eqref{eq: u lambda - 1 order} we get 
\begin{equation*}
  \l\varphi_{\lambda} + \uf + \frac{1}{2}|D\varphi_{\lambda}(x)|^{2} = f(x),\quad x\in \mathds{R}^n .
\end{equation*}
We let $\lambda_{k}\to 0$ and use the stability of  viscosity solutions to find that $v$ satisfies \eqref{eq: ergodic pde}. 

Now we note that \eqref{eq: ergodic pde} is an eikonal equation with right hand side $f(x) - \uf > 0$ in $\R^n \setminus \M$, $v\geq 0$ and $v=0$ on $\de\M$. This Dirichlet boundary value problem is known to have a unique viscosity solution bounded from below. Therefore the convergence of $\varphi_{\lambda}$ is for $\l\to 0$ and not only on subsequences.

The convergence of the gradient $Du_{\lambda
}(\cdot)$ to $Dv(\cdot)$ is a direct consequence of \cite[Theorem  3.3.3]{cannarsa2004semiconcave}, recalling that $|\varphi_{\lambda
}(x)|\leq R\,|x|$ and using the uniform semiconcavity estimate in Lemma \ref{lem: semiconcavity - lambda}.
\end{proof}

\subsection{Long time asymptotics} 
\label{sec: evolutive}

Here we consider the evolutive Hamilton-Jacobi equation
\begin{equation}
\label{eq: Cauchy pb- 1st order}
\left\{
\begin{aligned}
    \partial_{t}u(x,t)  + \frac{1}{2}|Du(x,t)|^{2} =&\, f(x),\quad && (x,t)\in\mathds{R}^n \times (0,+\infty) ,\\
    u(x,0) =&\, 0,\quad && x\in\mathds{R}^n ,
\end{aligned}\right.
\end{equation}
where $D=\nabla=D_x$ denotes the gradient with respect to the space variables $x$, and we will study the limit as $t\to +\infty$. The viscosity solution $u(x,t)$ is known to be the value function of 
the following finite-horizon optimal control problem
\begin{equation}\label{eq: u(x,t)}
\begin{aligned}
    u(x,t) = \inf\limits_{\alpha_{\cdot}} \; & J(x,t,\alpha_{\cdot}):=\int_{0}^{t}\; \frac{1}{2}|\alpha_{s}|^{2} + f(x(s))\;\text{d}s,\\
    & \text{s.t. }\; \dot{x}(s) = \alpha_{s},\quad x(0)=x\in\mathds{R}^n
\end{aligned}
\end{equation}
where $\alpha.:[0,+\infty)\to \mathds{R}^{n}$ are measurable functions
(see e.g. \cite[Chapter II]{fleming2006controlled} or \cite[Chapter III]{bardi2008optimal}).
The main result of this section is the following.

\begin{thm}\label{thm: ergodic 1}
Under assumptions (A), as $t\to+\infty$,
\begin{equation*} 
    \frac{u(x,t)}t  \to \underline{f} \quad and \quad
     u(x,t) -  \uf t  \to v(x) \quad \text{locally uniformly in } \mathds{R}^{n} ,
\end{equation*}
where $v(\cdot)$ is the viscosity solution of  \eqref{eq: ergodic pde} found in Theorem \ref{thm: ergodic 2}.\\
If we assume moreover that assumptions (B) hold, then
\begin{equation*} 
 D_x u(x,t) \to Dv(x)  \quad a.e. 
\end{equation*}
\end{thm}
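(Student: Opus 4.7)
\emph{Proof plan for Theorem \ref{thm: ergodic 1}.}
My plan is to rerun the three-step pattern of the stationary case (a priori estimates, compactness, uniqueness), now coupled with a monotonicity-in-$t$ argument, and then to recover the gradient convergence via a $t$-uniform semiconcavity estimate. First I would derive the evolutive analogue of Lemma \ref{lem: bound lambda u}: from the control representation \eqref{eq: u(x,t)} and $L(x,\alpha):=\tfrac12|\alpha|^2+f(x)\geq\uf$ one has $u(x,t)\geq\uf t$; the choice $\alpha\equiv 0$ at $\bar x\in\M$ gives $u(\bar x,t)=\uf t$; and a Hopf-type estimate (comparing straight-line trajectories issued from $x$ and from $y$ and optimizing over the connection time) yields the uniform bound $|D_xu(\cdot,t)|\leq R:=\sqrt{4\|f\|_\infty}$, independent of $t$. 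Combining these, $|u(x,t)-\uf t|\leq R\,\mathrm{dist}(x,\M)$, from which $u(x,t)/t\to\uf$ locally uniformly.

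Next I would set $w(x,t):=u(x,t)-\uf t$, which is a viscosity solution of $\partial_tw+\tfrac12|Dw|^2=f-\uf\geq 0$ with $w(\cdot,0)\equiv 0$. Since $w(\cdot,h)\geq 0=w(\cdot,0)$ for every $h>0$, the comparison principle for coercive first-order HJ equations on $\R^n$ (applicable under our Lipschitz bound) yields $w(x,t+h)\geq w(x,t)$, so $t\mapsto w(x,t)$ is nondecreasing. Viewing the stationary $v$ of Theorem \ref{thm: ergodic 2} as time-independent, it solves the same evolution equation with initial datum $v\geq 0=w(\cdot,0)$, so by comparison $w(x,t)\leq v(x)$ for all $t\geq 0$. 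The monotone, equi-Lipschitz family $\{w(\cdot,t)\}$ thus converges pointwise, and by Arzel\`a--Ascoli locally uniformly, to a Lipschitz limit $V$ with $0\leq V\leq v$. To identify $V=v$ I would consider the time shifts $w_n(x,s):=w(x,t_n+s)$, $t_n\to\infty$: they solve the evolution equation on $\R^n\times(-t_n,\infty)$ and converge locally uniformly to $V(x)$, constant in $s$, so by stability of viscosity solutions $V$ satisfies $\uf+\tfrac12|DV|^2=f$ in $\R^n$; since $V=0$ on $\M$ (because $w(\bar x,t)=0$ for $\bar x\in\M$) and $V\geq 0$ is bounded from below, the uniqueness part of Theorem \ref{thm: ergodic 2} forces $V=v$.

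Finally, under (B), I would prove that $u(\cdot,t)$ is semiconcave with a constant independent of $t$, by transposing the proof of Lemma \ref{lem: semiconcavity - lambda}. Starting from the viscous approximation $\partial_tu^\eps-\eps\Delta u^\eps+\tfrac12|Du^\eps|^2=f$, differentiating twice in a direction $\xi$, and using $(D^2_{\xi\xi}u^\eps)^2\leq|D_\xi Du^\eps|^2$ and $D^2_{\xi\xi}f\leq C_2$, one obtains for $\omega:=D^2_{\xi\xi}u^\eps$ the parabolic inequality
\[
\partial_t\omega-\eps\Delta\omega+Du^\eps\cdot D\omega+\omega^2\leq C_2.
\]
The logarithmic barrier $\Psi(x,t):=\omega(x,t)-\beta\log(1+|x|^2)$, combined with the $\eps,t$-uniform gradient bound $|Du^\eps|\leq R$ and with the fact that solutions of the Riccati-type inequality $\dot m+m^2\leq\mathrm{const}$ starting at $m(0)=0$ remain bounded for every $t\geq 0$, yields a bound on $\omega$ of the same form $\widetilde C_3=\sqrt{2(C_1+C_2)+1}$ as in the stationary case. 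Passing to $\eps\to 0$ gives the uniform-in-$t$ semiconcavity of $u(\cdot,t)$, and \cite[Theorem 3.3.3]{cannarsa2004semiconcave} then delivers $D_xu(x,t)\to Dv(x)$ a.e.

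The main obstacle I anticipate is precisely this last semiconcavity step: unlike the elliptic case of Lemma \ref{lem: semiconcavity - lambda}, here the log-barrier must be handled within a parabolic framework, so one has to justify rigorously (via a test-function/penalization argument in the viscosity-solution sense) the passage from the pointwise inequality at the moving maximum $\bar x(t)$ of $\Psi(\cdot,t)$ to a tractable ODE inequality for $M(t):=\sup_x\Psi(\cdot,t)$, before the Riccati argument can be invoked to close the bound independently of $t$.
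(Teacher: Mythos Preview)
Your convergence argument for $u(\cdot,t)-\uf t\to v$ is correct and takes a genuinely different route from the paper. The paper rescales time, sets $u_\eta(x,t):=\varphi_{t/\eta}(x)$, and passes to the half-relaxed semilimits $\theta,\zeta$ as $\eta\to 0$; these are sub- and supersolutions of the critical equation, vanish on $\partial\M$, and coincide by the comparison principle for the eikonal Dirichlet problem. Your monotonicity-plus-sandwich argument (comparison of $w(\cdot,\cdot+h)$ with $w$, and of $w$ with the stationary $v$) is more elementary and yields extra information, namely that $t\mapsto w(\cdot,t)$ is nondecreasing. Both approaches ultimately appeal to the same uniqueness statement from Theorem~\ref{thm: ergodic 2} to identify the limit.

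For the semiconcavity step, however, there is a concrete gap, and it is not the one you flag. You invoke an $\eps,t$-uniform bound $|Du^\eps|\leq R=\sqrt{4\|f\|_\infty}$ for the \emph{viscous} approximation, but this does not follow from the argument that gives \eqref{eq: unif grad est _ proof}: for the viscosity solution $u$ one uses $\tfrac12|Du|^2=f-\partial_t u$ together with $|\partial_t u|\leq\|f\|_\infty$, whereas for $u^\eps$ the equation reads $\tfrac12|Du^\eps|^2=f-\partial_t u^\eps+\eps\Delta u^\eps$, and the Laplacian term obstructs the bound. The paper (Lemma~\ref{lem: semiconcavity}) sidesteps this by using only the $t$-linear estimate $|Du^\eps(\cdot,t)|\leq tC_1$, obtained from the stochastic representation and the Lipschitz constant $C_1$ of $f$. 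It then maximises $\Psi=\omega-\beta\log(1+|x|^2)$ over the \emph{compact} slab $\R^n\times[0,T]$, so that at the space-time maximum $(\bar x,\bar t)$ one has $\partial_t\Psi\geq 0$ (with equality if $\bar t\in(0,T)$), $D\Psi=0$, $\Delta\Psi\leq 0$; choosing $\beta\leq 1/(2T)$ makes the drift term $|\beta Du^\eps\cdot Dg|\leq \beta TC_1\leq C_1/2$ harmless and yields $\omega(\bar x,\bar t)^2\leq C_1+C_2+2n\eps$ directly, with no Riccati ODE needed. The contradiction argument with $\beta\to 0$ then upgrades this to $\omega\leq C_3$ on all of $\R^n\times[0,T]$, uniformly in $T$. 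So the obstacle you anticipate (rigorously deriving $\dot M+M^2\leq\text{const}$ for the moving maximum) is entirely bypassed; the actual issue is that your plan leans on a gradient bound for $u^\eps$ that you have not established, whereas the paper's choice $\beta=\beta(T)$ works with the readily available $tC_1$ bound.
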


To proceed with its proof we need some estimates uniform in $t$.

\begin{lemma}
\label{lem: bound average unif x} Under the assumption (A1),  for all $(x,t)\in\mathds{R}^n \times (0,+\infty)$, 
\begin{equation}
    \underline{f}\leq \frac{u(x,t)}{t} \leq \overline{f} ,
\end{equation}
\begin{equation}
  \label{bound_de_t}
    \left|\partial_{t}u(x,t)\right| \leq \|f\|_{\infty}  \quad \text{a.e.} ,
\end{equation}
\begin{equation}
    \label{eq: unif grad est _ proof}
    |Du(x,t)|\leq \sqrt{4\|f\|_{\infty}}  \quad \text{a.e.} .
\end{equation}
\end{lemma}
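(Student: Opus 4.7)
The three bounds should all fall out of elementary dynamic programming arguments on the representation formula \eqref{eq: u(x,t)}; no weak KAM machinery is needed. My plan is to treat them in the order given, since the third bound is cleanest once the second is in hand.

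For the bound on $u(x,t)/t$, I would use the value function representation directly. Taking the admissible control $\alpha_{\cdot}\equiv 0$ yields the trajectory $x(s)\equiv x$, so $u(x,t)\le J(x,t,0)=\int_0^t f(x)\,\mathrm{d}s\le \overline{f}\,t$; for the lower bound, every admissible pair satisfies $\tfrac12|\alpha_s|^2+f(x(s))\ge \underline{f}$, hence $J(x,t,\alpha_{\cdot})\ge \underline{f}\,t$, and taking the infimum gives the desired inequality.

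For the time-derivative bound, I would prove the stronger statement that $u(\cdot,\cdot)$ is Lipschitz in $t$ with constant $\|f\|_\infty$, which yields \eqref{bound_de_t} by Rademacher. Given $h>0$ and $\varepsilon>0$, pick an $\varepsilon$-optimal $\alpha^{\cdot}$ for $u(x,t+h)$ and consider its restriction to $[0,t]$: since the truncated trajectory is admissible and the dropped cost is at least $-h\|f\|_\infty$, one gets $u(x,t)\le u(x,t+h)+\varepsilon+h\|f\|_\infty$. Conversely, pick an $\varepsilon$-optimal $\alpha^{\cdot}$ for $u(x,t)$ and extend by $0$ on $[t,t+h]$; the extended trajectory stays at $x(t)$ and the added cost is at most $h\|f\|_\infty$, giving $u(x,t+h)\le u(x,t)+\varepsilon+h\|f\|_\infty$. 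Letting $\varepsilon\to 0$ yields $|u(x,t+h)-u(x,t)|\le h\|f\|_\infty$.

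For the spatial gradient bound I see two natural routes. The PDE route: once $u$ is locally Lipschitz (via space-Lipschitzness argued below), it is differentiable a.e., the equation \eqref{eq: Cauchy pb- 1st order} holds pointwise a.e., and then $\tfrac12|Du|^2=f-\partial_t u$ combined with $|f|\le \|f\|_\infty$ and \eqref{bound_de_t} immediately gives $|Du|^2\le 4\|f\|_\infty$. The direct route, which avoids any regularity hypothesis, uses the dynamic programming principle: given $x,y\in\mathds{R}^n$ and $\tau\in(0,t]$, compare $u(y,t)$ to the cost of the straight-line steering $\alpha_s\equiv (x-y)/\tau$ on $[0,\tau]$ followed by an optimal trajectory on $[\tau,t]$, obtaining
\begin{equation*}
u(y,t)-u(x,t-\tau)\;\le\;\frac{|x-y|^2}{2\tau}+\tau\overline{f}.
\end{equation*}
Combining with the time-Lipschitz bound $u(x,t-\tau)\le u(x,t)+\tau\|f\|_\infty$ and optimizing in $\tau$ (choice $\tau=|x-y|/(2\sqrt{\|f\|_\infty})$) yields $u(y,t)-u(x,t)\le \sqrt{4\|f\|_\infty}\,|x-y|$, and symmetry delivers \eqref{eq: unif grad est _ proof}.

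I do not expect any of these steps to be hard; the only mildly delicate point is handling the truncation/extension of $\varepsilon$-optimal controls cleanly in the time-regularity argument (existence of true optimal controls is not guaranteed without further hypotheses). All the estimates are standard for HJB equations with bounded Lagrangian, and the constants are sharp in the sense that they track $\|f\|_\infty$ rather than anything depending on $t$.
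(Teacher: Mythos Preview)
Your proposal is correct. The paper proves only \eqref{bound_de_t} explicitly (calling the rest ``standard''), and for that estimate it takes a different route: rather than truncating/extending $\varepsilon$-optimal controls, it works directly at the PDE level via the comparison principle of Da Lio--Ley. Namely, it observes that $|u(x,h)|\le |h|\|f\|_\infty$ (your first bound at time $h$), and then that both $\overline{v}(x,t):=u(x,t+h)+|h|\|f\|_\infty$ and $\underline{v}(x,t):=u(x,t+h)-|h|\|f\|_\infty$ solve the same equation as $u$ with initial data ordered relative to $u(\cdot,0)=0$; comparison gives $\underline{v}\le u\le\overline{v}$, i.e.\ $|u(x,t+h)-u(x,t)|\le|h|\|f\|_\infty$.

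Your control-theoretic argument and the paper's comparison argument each have their advantages: yours is self-contained and never invokes a comparison theorem (which, for unbounded quadratic Hamiltonians on all of $\mathds{R}^n$, is not entirely trivial and is why the paper cites \cite{da2006uniqueness}); the paper's approach applies verbatim to any viscosity solution of \eqref{eq: Cauchy pb- 1st order}, not just the value function. For \eqref{eq: unif grad est _ proof}, your ``direct route'' via the steering control and optimisation in $\tau$ is the cleaner of your two options, since your PDE route presupposes space-Lipschitzness; just note that the optimal $\tau=|x-y|/(2\sqrt{\|f\|_\infty})$ must satisfy $\tau\le t$, which is harmless for the a.e.\ gradient bound since only small $|x-y|$ matters.
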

\begin{proof} The arguments are standard, for the reader's convenience we show \eqref{bound_de_t}.
Fix $h\in \mathds{R}$ and $x\in\mathds{R}^n$. 
Note first that $
|u(x,h)|\leq |h|\|f\|_{\infty}$.
Let us now denote $\overline{v}(x,t) := u(x,t+h) + |h|\|f\|_{\infty}$. Both $u$ and $\overline{v}$ solve the same PDE in \eqref{eq: Cauchy pb- 1st order} with initial conditions $u(x,0) = 0$ and $\overline{v}(x,0) = u(x,h) + |h|\|f\|_{\infty}
\geq 0$, hence by the comparison principle in \cite[Theorem  2.1]{da2006uniqueness} 
we get $u(x,t)\leq \overline{v}(x,t)$. 

Conversely, 
$\underline{v}(x,t) := u(x,t+h) - |h|\|f\|_{\infty}$ 
solves the same PDE in \eqref{eq: Cauchy pb- 1st order} with initial condition $\underline{v}(x,0) = u(x,h) - |h|\|f\|_{\infty} \leq u(x,0) = 0$. The same comparison principle now implies that $\underline{v}(x,t)\leq u(x,t)$. Therefore, one gets $|u(x,t+h)-u(x,t)|\leq |h|\|f\|_{\infty}$.
\end{proof}



\begin{lemma}\label{lem: semiconcavity}
Assume (A) and (B) hold. Then $u$ is $\widetilde{C}_{3}-$semiconcave, where $\widetilde{C}_{3}$ is a positive constant independent of $t\geq 0$.
\end{lemma}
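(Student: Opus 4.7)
The plan is to mirror the argument of Lemma~\ref{lem: semiconcavity - lambda}, replacing the role of the discount $\lambda$ by the flat initial datum $u(\cdot,0) \equiv 0$. I would first introduce the vanishing viscosity approximation
\begin{equation*}
\partial_t u^\varepsilon - \varepsilon \Delta u^\varepsilon + \tfrac{1}{2}|Du^\varepsilon|^2 = f(x), \qquad u^\varepsilon(x,0) = 0,
\end{equation*}
which is smooth for $t > 0$ by parabolic regularity. Fix a unit vector $\xi \in \mathbb{R}^n$ and set $\omega(x,t) := D^2_{\xi\xi} u^\varepsilon(x,t)$. Differentiating the PDE twice in $\xi$, using $\omega^2 \leq |D_\xi Du^\varepsilon|^2$ and the semiconcavity $D^2_{\xi\xi} f \leq C_2$, yields
\begin{equation*}
\partial_t \omega - \varepsilon \Delta \omega + Du^\varepsilon \cdot D\omega + \omega^2 \leq C_2 \quad \text{in } \mathbb{R}^n \times (0,+\infty),
\end{equation*}
with initial condition $\omega(\cdot,0) \equiv 0$.

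I would then apply a parabolic maximum principle with the spatial penalization of Lemma~\ref{lem: semiconcavity - lambda},
\begin{equation*}
\Psi(x,t) := \omega(x,t) - \beta \log(1+|x|^2), \quad \beta \in (0,1],
\end{equation*}
considered on $\mathbb{R}^n \times [0,T]$ for arbitrary $T > 0$. A rough a priori estimate $\omega(x,t) \leq C_2 t$, obtained by standard control-theoretic arguments on $u^\varepsilon$, guarantees that $\Psi$ attains a global maximum at some $(\bar x, \bar t)$. If $\bar t = 0$ then $\Psi \leq 0$; otherwise $\partial_t \Psi(\bar x, \bar t) \geq 0$ (this holds both at interior times and at $\bar t = T$ via the trailing difference quotient), $\Delta \Psi(\bar x, \bar t) \leq 0$, and $D\Psi(\bar x, \bar t) = 0$. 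Substituting into the differential inequality and using a uniform bound $|Du^\varepsilon| \leq C$, which can be derived from the PDE via $|Du^\varepsilon|^2 = 2(f - \partial_t u^\varepsilon + \varepsilon \Delta u^\varepsilon)$ together with a parabolic analogue of \eqref{bound_de_t}, gives
\begin{equation*}
\omega(\bar x, \bar t)^2 \leq C_2 + 2\varepsilon \beta n + \beta C.
\end{equation*}
The contradiction argument of Lemma~\ref{lem: semiconcavity - lambda} — assume $\omega(y,s) - \widetilde C_3 =: \delta > 0$ somewhere and pick $\beta$ so small that $\beta \log(1+|y|^2) \leq \delta/2$ — then forces $\omega \leq \widetilde C_3$ on all of $\mathbb{R}^n \times [0,+\infty)$, uniformly in $\varepsilon \in (0,1]$. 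Sending $\varepsilon \to 0$ transfers the estimate to the unique viscosity solution $u$ of \eqref{eq: Cauchy pb- 1st order}.

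The main obstacle is compensating for the missing discount term $\lambda\omega$: in Lemma~\ref{lem: semiconcavity - lambda} it allowed bounding $\omega$ at an extremum directly through $\omega^2 + \lambda\omega \leq C_2$. Here the same role is played by the quadratic term $\omega^2$ combined with the initial datum $\omega(\cdot,0) \equiv 0$, heuristically because any solution of the ODE $m' + m^2 = C_2$, $m(0) = 0$, stays bounded by $\sqrt{C_2}$ for all times. Turning this heuristic into a rigorous uniform-in-$t$ estimate requires a uniform-in-$t$ gradient bound $|Du^\varepsilon| \leq C$ which is not of the naive Lipschitz form $C_1 t$ coming directly from the control representation; obtaining it independently of $t$ and $\varepsilon$ is the key technical step. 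The minor subtlety that the penalized maximum may fall on the temporal boundary $\bar t = T$ is harmless, since the sign of $\partial_t \Psi(\bar x, T)$ is still $\geq 0$ and the differential inequality applies unchanged.
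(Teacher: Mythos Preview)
Your proposal follows the paper's strategy almost exactly: vanishing viscosity, differentiate twice in a direction $\xi$, penalize by $\beta\log(1+|x|^2)$, analyse the maximum of the penalized function on $\mathbb{R}^n\times[0,T]$, and run the contradiction argument. The one substantive divergence is where you locate the ``main obstacle.'' You argue that the crude Lipschitz bound $|Du^\varepsilon(x,t)|\leq C_1 t$ from the control representation is insufficient and that a uniform-in-$t$ gradient bound is the key missing ingredient. The paper shows this concern is unnecessary: it uses precisely the bound $|Du^\varepsilon|\leq C_1 t$, which at the maximum point contributes a term $2\beta\,T\,C_1$ to the right-hand side, and then simply imposes the additional constraint $\beta\leq 1/(2T)$ so that this term is bounded by $C_1$. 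In the contradiction step one first fixes the bad point $(y,s)$, then chooses $T>s$, and finally takes $\beta$ small enough to satisfy both $\beta\leq 1/(2T)$ and $\beta\log(1+|y|^2)\leq \delta/2$; since both constraints are downward there is no conflict.

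Your alternative route via a genuinely uniform gradient bound would also close the argument, but the derivation you sketch (read $|Du^\varepsilon|^2$ off the PDE) involves $\varepsilon\Delta u^\varepsilon$, which is exactly the second-order quantity not yet under control, so that step would need a different justification (e.g.\ a Bernstein-type argument). The paper's coupling of $\beta$ to $T$ sidesteps this entirely.
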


\begin{proof}
As we did in the proof of Lemma \ref{lem: semiconcavity - lambda}, we consider the vanishing viscosity approximation
\begin{equation}\label{eq: hjb T}
    \left\{
    \begin{aligned}
        &\partial_{t}u^{\varepsilon} - \varepsilon\Delta u^{\varepsilon} + \frac{1}{2}|\nabla u^{\varepsilon}|^{2} = f(x),\quad (x,t)\in \mathds{R}^{n}\times (0, \infty)\\  
        &u^{\varepsilon}(x,0) = 0,\quad x\in\mathds{R}^n
    \end{aligned}
    \right.
\end{equation}
It is known that $u^\eps$ is the value function of the stochastic control problem
\begin{equation}
    \label{eq: value T}
    u^{\varepsilon}(x,t) = \inf\limits_{\alpha_{\cdot}\in\mathcal{A}} 
    \mathds{E}\left[\int_{0}^{t}\frac{1}{2}|\alpha_{s}|^{2} + f(X_{s})\,\text{d}s\,\bigg|\, X_{0}=x\right], \quad
    \text{d}X_{s} = \alpha_{s}\,\text{d}s + \sqrt{2\varepsilon}\,\text{d}W_{s} . 
\end{equation}

Take 
$\xi\in\mathds{R}^{n}$ with $|\xi| = 1$ and let $\omega(x,t) := D^{2}_{\xi\xi}u^{\varepsilon}(x,t)$ be the second order derivative in space in the direction $\xi$. 
We claim first that $\omega(x,t)\leq t\,C_{2}$ 
or, equivalently, the value function $u^{\varepsilon}(x,t)$ is $t\,C_{2}$-semiconcave in the spatial variable $x$. Let $\delta>0$ and take a $\frac{\delta}{2}$-optimal control for the initial point $x$. By using  the same control for the initial points $x+h$ and $x-h$ we get 
\begin{equation}
\begin{aligned}
    &u^{\varepsilon}(x+h,t) - 2 u^{\varepsilon}(x,t) + u^{\varepsilon}(x-h,t) - \delta \leq\\
    &\quad\quad\quad\quad\quad\quad \mathds{E}\left[\int_{0}^{t} f(X^{x+h}_{s}) - 2 f(X^{x}_{s}) + f(X^{x-h}_{s})\; \text{d}s\right]
\end{aligned}
\end{equation}
From the controled diffusion in \eqref{eq: value T} we have $X^{x}_{s} = \frac{1}{2}\left(X^{x+h}_{s} + X^{x-h}_{s}\right)$, and $f$  $C_{2}$-semiconcave implies 
\begin{equation}
\begin{aligned}
    &\mathds{E}\left[\int_{0}^{t} f(X^{x+h}_{s}) - 2 f(X^{x}_{s}) + f(X^{x-h}_{s}) \; \text{d}s\right] \\
    &\quad\quad\quad\quad \leq\; C_{2} \mathds{E}\left[\int_{0}^{t} \frac{1}{4}\left|X^{x+h}_{s} - X^{x-h}_{s}\right|^{2} \; \text{d}s\right] 
    \leq \; t\,C_{2} \,|h|^{2}
\end{aligned}
\end{equation}
Since $\delta>0$ is arbitrary we have proved the claim. 
Similar computations (see \cite{kouhkouh22PhD}) yield
\begin{equation}\label{eq: lip estimate x}
    |Du^{\varepsilon}(x,t)| \leq t \,C_{1}.
\end{equation}
          
            Next we differentiate twice \eqref{eq: hjb T} in the direction of $\xi$ and obtain
            \begin{equation}
    \partial_{t}\omega - \varepsilon\Delta \omega + Du^{\varepsilon}\cdot D\omega + |D_{\xi}Du^{\varepsilon}|^{2} = D_{\xi\xi}f,\quad \text{in } \mathds{R}^n\times (0,T].
\end{equation}
Since $\omega^{2}\leq |D_{\xi}D u^{\varepsilon}|^{2}$ and by the semiconcavity assumption  $D^{2}_{\xi\xi}f \leq C_{2}$ 
\begin{equation}
\label{eq: omega _ proof T}
    \partial_{t}\omega - \varepsilon\Delta \omega + Du^{\varepsilon}\cdot D\omega + \omega^{2} \leq  C_{2},\quad \text{in } \mathds{R}^n\times (0,+\infty).
\end{equation}
Now set $g(x) := \log(1+|x|^{2})$ and $\Phi(x,t):= \omega(x,t) - \beta g(x)$, in $\mathds{R}^n\times (0,+\infty)$ for some $\beta>0$ to be made precise. 
Since $\omega$ is bounded from above 
for $0\leq t\leq T$, 
$\Phi$ admits a global maximum in $\mathds{R}^n \times [0,T]$. Let $(\overline{x},\overline{t})$ 
be such a maximum point. 
We consider first the case $\overline{t}\in (0,T)$ and evaluate \eqref{eq: omega _ proof T} in $(\overline{x},\overline{t})$ to get
\begin{equation}
\label{eq: omega _ proof 3 - T lambda}
    \omega^{2}(\overline{x},\overline{t}) 
      \leq C_{2} + 2 \varepsilon\beta \frac{n+(n-2)|\overline{x}|^{2}}{(1+|\overline{x}|^{2})^{2}}-2\beta Du^{\varepsilon}(\overline{x},\overline{t})\cdot \frac{\overline{x}}{1+|\overline{x}|^{2}}
\end{equation}
Note that $x\in \mathds{R}^n\mapsto \frac{n+(n-2)|x|^{2}}{(1+|x|^{2})^{2}}$ has a global maximum in $x=0$ for $n\geq 2$, and $\frac{x}{1+|x|^{2}}$ is bounded. 
Then, by \eqref{eq: lip estimate x} 
the bound in \eqref{eq: omega _ proof 3 - T lambda} gives
\begin{equation*}
    \omega^{2}(\overline{x},\overline{t}) \leq C_{2} + 2\varepsilon\beta n + 2\beta \,T\, C_{1} .
\end{equation*}
We choose $\beta
$ and $T$ such that 
$\beta\leq 1/(2T)<1$. Then
\begin{equation}
\label{eq: omega _ proof 4 T lambda}
    \omega(\overline{x},\overline{t})^{2}  \leq 
     C_{2} + C_{1} + 2n\varepsilon .
\end{equation}

 On the other hand, if $\overline{t}= 0$, 
 $u_{\lambda}(x,0)=0$ for all $x$ implies $\omega(\overline{x},0) = 0$ and \eqref{eq: omega _ proof 4 T lambda} still holds. 
 And if $\overline{t}= T$ 
 then $\partial_{t}\Phi(\overline{x},T) \geq 0$, i.e., $\partial_{t}\omega(\overline{x},T)\geq 0$ and \eqref{eq: omega _ proof 4 T lambda} still holds.  Therefore we have
\begin{equation}
\label{eq: omega C 3 - T lambda}
    \omega(\overline{x},\overline{t})
    \leq C_{3}:=\sqrt{C_{1} + C_{2} + 2\varepsilon n} .
\end{equation}

We are now ready to prove  that $\omega(x,t) \leq C_{3}$ for all $(x,t)\in \mathds{R}^n\times (0,+\infty)$. 
As in the proof of Theorem \ref{thm: ergodic 2} we suppose by contradiction there exists $(y,s)$ 
such $\omega(y,s) - C_{3}=: \delta >0$. Without loss of generality, we can choose $T>0$ large enough such that $s< T$.
Then we argue exactly as in the proof of Theorem \ref{thm: ergodic 2}
and reach a contradiction by choosing $\beta$ 
such that 
$\beta g(y)\leq \frac{\delta}{2}$. 
This proves the $C_3$-semiconcavity of $u$ with respect to $x$ uniformly in $t$, 
for every 
 $0<\varepsilon\leq 1$.
Finally, we let $\eps\to 0$ in \eqref{eq: hjb T} and get that the solution $u$ to \eqref{eq: Cauchy pb- 1st order} is  semi-concave in $x$ with constant $\widetilde{C}_{3}\coloneqq\sqrt{C_{1} + C_{2}}$. 
\end{proof}


\begin{proof}{(Theorem \ref{thm: ergodic 1})}\\
First we observe
 that  $\frac{1}{t} u(x,t) = \underline{f}$  if $\bar x\in \M$.
In fact, for such $\bar x$,
 \begin{equation*}
 u(\overline{x},t)=\inf\limits_{\alpha_{\cdot}} \int_{0}^{t}\frac{1}{2}|\alpha_{s}|^{2}+ f(x(s))\,\text{d}s
   \leq \int_{0}^{t}f(\overline{x})\,\text{d}t = t\uf ,
\end{equation*}
where the 
 inequality follows from the choice $\alpha_{\cdot}\equiv 0$. 
 The other inequality $\geq$ is true for all $x\in\mathds{R}^{n}$ by Lemma \ref{lem: bound average unif x}.

Denote $R\coloneqq \sqrt{4\|f\|_{\infty}}$ and use the gradient bound  \eqref{eq: unif grad est _ proof} to get 
\[
\left|\frac 1t u(x,t)-\uf\right |\leq \frac 1t R \,\text{dist}(x, \M)  \quad \forall \, x\in\R^n, \; t>0 . 
\]
Then
$u(x,t)\to\uf$ 
{locally uniformly}  as $t\to\infty$.


Define now $\varphi_{t}(\cdot):=u(\cdot,t)-\uf t$. We observe that, in view of \eqref{eq: unif grad est _ proof}, 
$|\varphi_{t}(x)|\leq R\, \text{dist}(x, \M)$ and 
$|\varphi_{t}(x) - \varphi_{t}(y)|\leq R|x-y|$. 
Hence, $\{\varphi_{t}(\cdot)\}_{t\geq 0}$ is a locally uniformly bounded and equi-continuous family. 
%
We claim that   $\varphi_{t}(\cdot)\to \psi(\cdot)\in C(\mathds{R}^n)$ locally uniformly as $t\to+\infty$ and
$\psi(\cdot)$ is a 
 viscosity solution of 
\begin{equation}
\label{eq: pde psi}
    \underline{f} + \frac{1}{2}|D\psi(x)|^{2} = f(x),\quad \text{ in } \mathds{R}^{n}.
\end{equation}
To prove the claim define $u_{\eta}(x,t) \coloneqq \varphi_{
{t}/{\eta}} \left(x\right) = u\left(x,\frac{t}{\eta}\right)-\frac{t}{\eta}\underline{f}$. Then we have
\begin{equation*}
    \eta\partial_{t}u_{\eta} + \underline{f} + \frac{1}{2}|Du_{\eta}|^{2} = f(x) ,\quad \text{ in } \mathds{R}^{n}\times (0,\infty).
\end{equation*}
Now consider
 the upper and lower relaxed semilimits 
\begin{equation*}
   \theta(x,t):= \limsup_{\eta \to 0,\, s\to t,\, y\to x}     u_\eta(y,s) ,
   \quad    \zeta(x,t)
:=\liminf_{\eta \to 0,\, s\to t,\, y\to x}     u_\eta(y,s) ,
\end{equation*}
and note that they are finite by the local equiboundedness of $\varphi_t$.
It is well-known from the stability properties of viscosity solutions (see, e.g., \cite{bardi2008optimal}) that they are, respectively, a 
 sub- and supersolution of \eqref{eq: pde psi} for any $t>0$. Moreover, for all $t>0$,
\[
 \theta(x,t)= \limsup_{s\to +\infty,\, y\to x}    \varphi_s(y) = \limsup_{s\to +\infty}    \varphi_s(x) ,
\]
where the last equality comes from the equicontinuity of $\varphi_t$. Similarly, 
$$ \zeta(x,t)=\liminf_{s\to +\infty}    \varphi_s(x) $$
and so both $\theta$ and $\zeta$ do not depend on $t$. Next note that $\varphi_s(x)=0$ for all  $x\in \M$ and it is non-negative everywhere. Then 
$\theta(x) =  \zeta(x) = 0$ on $\de \M$, and they are a sub- and a supersolution bounded from below of \eqref{eq: pde psi} in $\R^n\setminus \M$, where $f(x)-\uf>0$. Then a standard comparison principle for the Dirichlet problem associated to eikonal equations gives
$\theta(x) =  \zeta(x)$. This proves that $\varphi_t$ converges pointwise to $\psi:=\theta=\zeta\geq 0$, and the convergence is locally uniform by the Ascoli-Arzela theorem, which gives the claim. Moreover $\psi$ coincides with the function $v$ found in Theorem \ref{thm: ergodic 2}.
 

Finally, the convergence of the gradient $D_xu(\cdot,t)=D\varphi_t$ to $D\psi$ 
 is a direct consequence of \cite[Theorem  3.3.3]{cannarsa2004semiconcave}, recalling that $|\varphi_{t}(x)|\leq R\, \text{dist}(x, \M)$ and using the uniform semiconcavity estimate in Lemma \ref{lem: semiconcavity}.
\end{proof}

\section{Reaching the minima via optimal control}  %
\subsection{The optimal control problem with target}
\label{sec: prob target}


In this section we consider  the Dirichlet problem
\begin{equation}\label{eq: eikonal - ell}
\left\{\quad
\begin{aligned}
     |\nabla v(x)| &= \ell(x), & x\in \mathds{R}^{n}\setminus \mathfrak{M} ,\\
     v(x) &= 0, & x\in \mathfrak{M} ,
\end{aligned}\right.
\end{equation}
motivated by the ergodic equation \eqref{eq: ergodic pde} of the previous section if $\ell(x)=\sqrt{ 2(f(x) - \underline{f}) }$. 
Here, however, the standing assumptions are only that $\M\subseteq \R^n$ is a closed nonempty set, possibly unbounded,  and 
{\begin{equation}
\label{elle}
\tag{F}
\ell\in C(\R^n) \text{ is bounded }, \; \ell(x)>0 \text{ if } x\in \mathds{R}^{n}\setminus \mathfrak{M} , \;\;  \ell\equiv 0 \text{ on } \M .
\end{equation}
Also define $\overline{\ell}:=\sup\limits_{x\in\mathds{R}^{n}}\ell(x)$.  
The Lipschitz and semiconcavity conditions 
of the previous section (assumptions (B)) will not
be needed 
in most statements of the present section. }

We recall that the continuous viscosity solution of \eqref{eq: eikonal - ell}
is 
 the value function of the control problem
\begin{equation}
\label{eq: control representation v}
    v(x)=\inf\limits_{\alpha
    }\int_{0}^{t_{x}(\alpha)} \ell(y_{x}^{\alpha}(s))\,\text{d}s,
\end{equation}
where $\alpha$ (an admissible control) is a measurable function 
$[0,+\infty) \to B(0,1)$,  the unit ball in $\mathds{R}^{n}$, $t_{x}(\alpha):=\inf\{s \geq 0\,:\, y_{x}^{\alpha}(s) \in \mathfrak{M}\}$, and
\begin{equation}
    \label{eq: dynamics eikonal}
    \dot{y}^{\alpha}_{x}(s) = \alpha(s),\,\forall\,s\geq 0,\quad y_{x}^{\alpha}(0)=x.
\end{equation}


\begin{thm}\label{thm: existence control deter}
Under 
{ assumption \eqref{elle}} there exists an optimal control $\alpha^{*}$ for the problem \eqref{eq: control representation v}.
\end{thm}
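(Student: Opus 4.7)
The plan is to apply the direct method of the calculus of variations to the functional $J(x,\alpha):=\int_{0}^{t_{x}(\alpha)}\ell(y_{x}^{\alpha}(s))\,ds$. The case $x\in\mathfrak{M}$ is immediate (take $\alpha^{*}\equiv 0$, giving $t_{x}(\alpha^{*})=0$ and $v(x)=0$), so I would assume $x\notin\mathfrak{M}$. Pick a minimizing sequence $\{\alpha_{n}\}$ with trajectories $y_{n}$ and hitting times $t_{n}:=t_{x}(\alpha_{n})$, and normalise by extending each $\alpha_{n}$ past $t_{n}$ by $0$: then $y_{n}(s)=y_{n}(t_{n})\in\mathfrak{M}$ and $\ell(y_{n}(s))=0$ for $s\ge t_{n}$, so $J(x,\alpha_{n})=\int_{0}^{+\infty}\ell(y_{n}(s))\,ds\to v(x)$.

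Compactness comes essentially for free: since $|\alpha_{n}|\le 1$ each $y_{n}$ is $1$-Lipschitz with $y_{n}(0)=x$, hence $\{y_{n}\}$ is locally equibounded and equicontinuous on $[0,+\infty)$. Ascoli--Arzel\`a together with a diagonal extraction on the intervals $[0,k]$ then produce a subsequence converging locally uniformly to a $1$-Lipschitz limit $y^{*}$, and (after passing to a further subsequence) $\alpha_{n}\rightharpoonup^{*}\alpha^{*}:=\dot y^{*}$ in $L^{\infty}_{\mathrm{loc}}$ with $|\alpha^{*}|\le 1$ a.e.\ by weak-$*$ lower semicontinuity of the norm, so $y^{*}=y_{x}^{\alpha^{*}}$ is an admissible trajectory of the controlled system \eqref{eq: dynamics eikonal}.

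The optimality of $\alpha^{*}$ will split into two cases according to $t^{*}:=\liminf_{n}t_{n}$. When $t^{*}<+\infty$ I would pass to a sub-subsequence with $t_{n}\to t^{*}$ and use the closedness of $\mathfrak{M}$ together with $y_{n}(t_{n})\in\mathfrak{M}$ and $y_{n}(t_{n})\to y^{*}(t^{*})$ to deduce $y^{*}(t^{*})\in\mathfrak{M}$, hence $t_{x}(\alpha^{*})\le t^{*}$; since $\ell(y_{n})\to\ell(y^{*})$ uniformly on the fixed interval $[0,t^{*}+1]$ and $\ell\ge 0$, one obtains
\[
J(x,\alpha^{*})=\int_{0}^{t_{x}(\alpha^{*})}\!\ell(y^{*})\,ds\;\le\;\int_{0}^{t^{*}}\!\ell(y^{*})\,ds\;=\;\lim_{n}\int_{0}^{t_{n}}\ell(y_{n})\,ds\;=\;v(x),
\]
which combined with $J(x,\alpha^{*})\ge v(x)$ yields optimality.

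The delicate case, and the main obstacle, is $t^{*}=+\infty$, where a priori $y^{*}$ might never hit the target. Here Fatou's lemma on the extended trajectories gives $\int_{0}^{+\infty}\ell(y^{*}(s))\,ds\le v(x)<+\infty$, and applying the suboptimality principle to the tail of $\alpha_{n}$ starting from $y^{*}(T)$ (together with continuity of $v$ and $v^{-1}(0)=\mathfrak{M}$) should force $v(y^{*}(T))\to 0$, i.e., $y^{*}(T)$ must accumulate on $\mathfrak{M}$ as $T\to+\infty$. I would then splice in, at a large time $T$, a short $\varepsilon$-optimal arc driving $y^{*}(T)$ into $\mathfrak{M}$ in finite time; this produces a new minimizing sequence with uniformly bounded hitting times, reducing the question to the previous case. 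Making this reduction fully rigorous is where I expect the real work to be, since without an assumption like $\liminf_{|x|\to\infty}\ell(x)>0$ one must rule out the possibility that minimising trajectories drift to infinity along sublevel sets of $\ell$.
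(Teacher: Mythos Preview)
Your compactness argument and the use of Fatou are exactly the paper's proof, but you manufacture a difficulty that is not there. The paper's first line is the observation that, since $\ell\equiv 0$ on $\mathfrak{M}$ and $\ell\ge 0$, the value function can be rewritten as
\[
v(x)=\inf_{\alpha}\int_{0}^{+\infty}\ell\bigl(y_{x}^{\alpha}(s)\bigr)\,ds,
\]
which is precisely your extension-by-zero normalisation. The point of this reformulation is that the hitting time disappears from the problem entirely: once Fatou gives $\int_{0}^{+\infty}\ell(y^{*})\,ds\le v(x)$, you are finished, because
\[
J(x,\alpha^{*})=\int_{0}^{t_{x}(\alpha^{*})}\ell(y^{*})\,ds\;\le\;\int_{0}^{+\infty}\ell(y^{*})\,ds\;\le\; v(x)
\]
holds regardless of whether $t_{x}(\alpha^{*})$ is finite or infinite. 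Your case split on $t^{*}=\liminf_{n}t_{n}$ and the subsequent splicing construction are therefore unnecessary: nothing in the definition of the problem requires an optimal control to reach $\mathfrak{M}$ in finite time, and a trajectory that drifts to infinity along sublevel sets of $\ell$ while realising the infimum is a perfectly legitimate optimiser. Whether optimal trajectories actually hit $\mathfrak{M}$ in finite time is a separate, genuinely harder question that the paper treats later under the extra hypotheses \eqref{eq: assumption for stab} and \eqref{eq: assumption L finite time}; it plays no role in the existence proof.
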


\begin{proof}
Notice first that  \eqref{elle} allows to rewrite $v$ as 
\begin{equation*}
    v(x)=\inf\int_{0}^{+\infty} \ell(y_{x}^{\alpha}(s))\,\text{d}s,\; \text{ s.t.:}\; \eqref{eq: dynamics eikonal}\; \text{with}\; s\mapsto\alpha(s)\in B(0,1) \, \text{ measurable}.
\end{equation*}
Fix $x\in\mathds{R}^{n}$ and consider  a minimizing sequence $(y_{k},\alpha_{k})_{k}$, i.e., satisfying
\begin{equation}\label{eq: convergence minimizing seq}
    \lim\limits_{k\to+\infty} \int_{0}^{+\infty} \ell(y_{k}(t))\,\text{d}t = v(x) , \quad  y_{k}(t) = x +\int_{0}^{t}\alpha_{k}(s)\,\text{d}s,\; 
    \forall\, t\geq 0 .
\end{equation}
Fix $N\in\mathds{N}$. 
Using Alaoglu's theorem, we can extract a subsequence that we denote by $(y_{k(N)}, \alpha_{k(N)})$, where $k(N)\to+\infty$, such that
\begin{equation*}
    \begin{aligned}
        & \alpha_{k(N)} \overset{\ast}{\rightharpoonup} \alpha^{\ast}_{N},\; \text{ a.e. in }\, [0,N] ,\\
        & y_{k(N)} \rightarrow y_{N}^{\ast},\; \text{loc. unif. on } \, [0,N] , \\
        \text{and }\; & y_{N}^{\ast}(t) = x + \int_{0}^{t}\alpha_{N}^{\ast}(s)\,\text{d}s,\; \text{for all }\, t\in [0,N].
    \end{aligned}
\end{equation*}
We repeat this procedure in the interval $[0,N+1]$
and  extract from the previous subsequence 
another subsequence $(y_{k(N+1)}, \alpha_{k(N+1)})$ with the same properties  in $[0,N+1]$. Note that
\begin{equation*}
    \begin{aligned}
        & \alpha^{\ast}_{N+1}=\alpha_{N}^{\ast},\; \text{ a.e. in }\, [0,N]\\
        & y^{\ast}_{N+1}=y^{\ast}_{N},\; \text{ in }\, [0,N]
    \end{aligned}
\end{equation*}
This suggests the definition of the candidate optimal pair $(y^{\ast},\alpha^{\ast})$ as
\begin{equation*}
    (y^{\ast},\alpha^{\ast}) \coloneqq (y^{\ast}_{N},\alpha^{\ast}_{N})\quad \text{ in } [0,N].
\end{equation*}
To prove  its optimality 
consider the diagonal subsequence $(y_{N(N)},\alpha_{N(N)})$. 
By the previous construction, for any fixed $T>0$ 
we have 
\begin{equation}\label{eq: convergence subsequence}
    \begin{aligned}
        & \alpha_{N(N)} \overset{\ast}{\rightharpoonup} \alpha^{\ast},\; \text{ a.e. in }\, [0,T],\\
        & y_{N(N)} \rightarrow y^{\ast},\; \text{loc. unif. on } \, [0,T],\\
        \text{and }\; & y^{\ast}(t) = x + \int_{0}^{t}\alpha^{\ast}(s)\,\text{d}s,\; \text{for all }\, t\in [0,T].
    \end{aligned}
\end{equation}
Now use  Fatou's lemma 
\begin{equation*}
    \int_{0}^{\infty} \liminf\limits_{N\to +\infty} \ell(y_{N(N)}(t))\,\text{d}t \leq \liminf\limits_{N\to\infty}\int_{0}^{+\infty} \ell(y_{N(N)}(t))\,\text{d}t.
\end{equation*}
By \eqref{eq: convergence minimizing seq} the right-hand side is 
 $v(x)$ because  $y_{N(N)}$ is a subsequence of $y_{k}$. 
 Now use the continuity of $\ell$ in the left hand side and get
 {
\begin{equation*}
  \int_{0}^{\infty} \ell(y^{\ast}(t))\,\text{d}t  =   \int_{0}^{\infty} \liminf\limits_{N\to +\infty} \ell(y_{N(N)}(t))\,\text{d}t \leq v(x),
\end{equation*}
}
which says that $(y^{\ast},\alpha^{\ast})$ is an optimal pair solution to \eqref{eq: control representation v}.
\end{proof}


Next we show that the fraction of time spent by an optimal trajectory 
 away from the minimizers of $\ell$ 
  tends to zero as $t\to+\infty$. 
  For a given fixed $\delta>0$ we define the set of quasi-minimizers
\begin{equation*}
    K_{\delta}:= \{x\in \mathds{R}^n\,:\, \ell(x)\leq  \delta\}
\end{equation*}
and the fraction of time $\rho^{\delta}(t)$ spent by an optimal trajectory starting from $x$ away from $K_{\delta}$ 
\begin{equation*}
    \rho^{\delta}(t)=  \rho^{\delta}(t,x, \a^*) := \frac{1}{t}\big|\{s\in[0,t]\,:\, y_{x}^{\alpha^{*}}(s)\notin K_{\delta}\}\big| ,
\end{equation*}
where $\big|I\big|$ denotes the Lebesgue measure of $I\subseteq\R$. 
{ In other words, $\rho^{\delta}(t)$ is the image of { the complement of} $K_{\delta}$ by the {\em occupational measure} of the optimal trajectory $ y_{x}^{\alpha^{*}}.$
}
\begin{thm}
\label{thm: conv time average}
{ Under 
assumption \eqref{elle}},   for any $x\in\mathds{R}^{n}$ and $\delta>0$, an optimal trajectory $y_{x}^{\alpha^{*}}(\cdot)$ for the problem \eqref{eq: control representation v} satisfies 
\begin{equation}\label{eq: thm long time average deter}
    \rho^{\delta}(t,x, \a^*) \leq\, \frac{\overline{\ell}}{t\,\delta}\,\text{dist}(x,\mathfrak{M})  .
\end{equation}
In particular, 
$  \lim\limits_{t\to + \infty} \rho^{\delta}(t) = 0$. 
\end{thm}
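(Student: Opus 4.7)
The plan is to use two simple bounds on the value function in tandem: an upper bound obtained by an explicit admissible control, and a lower bound obtained by restricting the integral to the portion of trajectory outside $K_\delta$.

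First I would establish the upper bound $v(x) \leq \overline{\ell}\,\mathrm{dist}(x,\mathfrak{M})$. Pick any $\bar{x}\in \mathfrak{M}$ with $|x-\bar{x}|=\mathrm{dist}(x,\mathfrak{M})$ (or come arbitrarily close) and take the admissible control $\alpha(s)=(\bar{x}-x)/|\bar{x}-x|$ for $s\in [0,\mathrm{dist}(x,\mathfrak{M})]$, which has $|\alpha|=1$ and steers $x$ to $\bar{x}\in \mathfrak{M}$ in time $\mathrm{dist}(x,\mathfrak{M})$. Plugging into \eqref{eq: control representation v} and using $\ell\leq \overline{\ell}$ yields the bound.

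Next I would derive a lower bound for $\int_0^t \ell(y^{\alpha^*}_x(s))\,ds$ in terms of the quantity $\rho^\delta(t)$. Since an optimal trajectory exists by Theorem \ref{thm: existence control deter}, and since $\ell\equiv 0$ on $\mathfrak{M}$ (so extending the integral past the first hitting time costs nothing), the definition of $v$ in \eqref{eq: control representation v} gives
\begin{equation*}
v(x)=\int_0^{+\infty} \ell(y_x^{\alpha^*}(s))\,ds \;\geq\; \int_0^t \ell(y_x^{\alpha^*}(s))\,ds \;\geq\; \int_{\{s\in[0,t]\,:\,y_x^{\alpha^*}(s)\notin K_\delta\}} \ell(y_x^{\alpha^*}(s))\,ds.
\end{equation*}
On the set where $y_x^{\alpha^*}(s)\notin K_\delta$ one has $\ell(y_x^{\alpha^*}(s))>\delta$ by the very definition of $K_\delta$, so the last integral is at least $\delta \cdot t \cdot \rho^\delta(t,x,\alpha^*)$.

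Combining the two bounds yields $t\,\delta\,\rho^\delta(t,x,\alpha^*)\leq v(x)\leq \overline{\ell}\,\mathrm{dist}(x,\mathfrak{M})$, which is precisely \eqref{eq: thm long time average deter}. Letting $t\to+\infty$ with $x$ and $\delta$ fixed gives $\rho^\delta(t)\to 0$. I do not anticipate any real obstacle: the argument is a textbook Markov/Chebyshev-style comparison, with the only subtlety being the observation that $\ell$ vanishes on $\mathfrak{M}$ so that the optimal cost may be written as an integral over $[0,+\infty)$ even though the trajectory may reach the target at a finite time (or not at all).
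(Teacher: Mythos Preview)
Your proposal is correct and follows essentially the same approach as the paper: bound $v(x)$ from above by $\overline{\ell}\,\mathrm{dist}(x,\mathfrak{M})$ via the straight-line control to a nearest point of $\mathfrak{M}$, and from below by $\delta\, t\,\rho^\delta(t)$ via a Chebyshev-type estimate on the cost integral restricted to the set where the trajectory lies outside $K_\delta$. The paper phrases the upper bound through the minimal-time function and orders the two steps differently, but the substance is identical.
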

\begin{proof}
Since $\ell\geq 0$, using the characteristic function $\mathds{1}_{Q}(y)=1$ if $y\in Q$ and 0 otherwise, 
\begin{equation*}
        \int_{0}^{t}\ell(y_{x}^{\alpha^{*}}(s))\text{d}s 
         \geq \int_{0}^{t}\mathds{1}_{K^{c}_{\delta}}(y_{x}^{\alpha^{*}}(s))\,\ell(y_{x}^{\alpha^{*}}(s))\,\text{d}s\;\geq \delta\,\int_{0}^{t}\mathds{1}_{K^{c}_{\delta}}(y_{x}^{\alpha^{*}}(s))\,\text{d}s
\end{equation*}
and hence 
\begin{equation*}
    \frac{1}{t}\int_{0}^{t}\ell(y_{x}^{\alpha^{*}}(s))\text{d}s\; \geq\; \delta\,\rho^{\delta}(t) 
    .
\end{equation*}
Now, since $\ell(y_{x}^{\alpha^{*}}(s)) = 0$ for all $s\geq t_{x}(\alpha^{*})$ { and $\ell(\cdot)\leq \Bar{\ell}$}, we have for all $t\geq 0$
\begin{equation*}
\begin{aligned}
	\int_{0}^{t}\ell(y_{x}^{\alpha^{*}}(s))\,\text{d}s & \;\leq\; \int_{0}^{t_{x}(\alpha^{*})}\ell(y_{x}^{\alpha^{*}}(s))\,\text{d}s,\\
		& \;=\; v(x) \; \leq \; \Bar{\ell}\; \inf 
		\left\{ t_{x}(\alpha) : 
		\eqref{eq: dynamics eikonal} \text{ holds with } |\a(s)|\leq 1\right\}. 
\end{aligned}
\end{equation*}
The second factor on the right-hand side is the minimal time function 
 whose optimal trajectories are the straight lines from the initial position $x$ to its orthogonal projection on the set $\mathfrak{M}$, with maximal speed $1$. Therefore 
  the right-hand side in the last inequality is less or equal $\Bar{\ell}|z-x|$ for any $z\in \mathfrak{M}$, and then 
\begin{equation*}
    v(x) \leq \Bar{\ell} \;\text{dist}(x,\mathfrak{M}).
\end{equation*}
Combining the inequalities we get 
\begin{equation*}
	0 \leq\; \delta\,\rho^{\delta}(t) \leq\; \frac{1}{t}\int_{0}^{t}\ell(y_{x}^{\alpha^{*}}(s))\,\text{d}s \leq\; \frac{v(x)}{t} \leq\; \frac{\Bar{\ell}}{t} \,\text{dist}(x,\mathfrak{M}) ,
\end{equation*}
which concludes the proof.
\end{proof}

\subsection{A 
gradient descent inclusion for the optimal trajectories}
\label{sec: diff inclusion}

So far, we showed that an optimal control exists and the corresponding optimal trajectory does not leave the set of minimizers in average as time goes to infinity, i.e. in the sense of \eqref{eq: thm long time average deter}. We now synthesize optimal feedback controls 
that give 
the gradient descent differential inclusion anticipated in the Introduction. 
{ We recall the definition of subdifferential of a continuous function
\begin{equation*}
	D^{-}v(z)  \coloneqq \left\{ \, p \, :\; \liminf\limits_{x\to z}
	\frac{v(x) - v(z) - p\cdot(x-z)}{|x-z|} \geq 0 \right\}.
\end{equation*}
}

\begin{thm}
\label{thm: characterization optimal trajectory deter}
{ Assume  \eqref{elle}}.
A control $\alpha$ with corresponding trajectory $y(\cdot):=y_{x}^{\alpha}(\cdot)$ is optimal if and only if
\begin{equation}\label{eq: DI}\tag{DI}
    \dot{y}(s)\in \left\{ - \frac{p}{|p|}\,,\; p\in D^{-}v(y(s))\right\},\; \text{ for a.e. }\, s\in \;(0,t_{x}(\alpha)).
\end{equation}
\end{thm}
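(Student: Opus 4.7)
The proof follows the classical optimal-synthesis scheme (cf.\ \cite[Ch.~IV]{bardi2008optimal}, \cite{berkovitz1989optimal, frankowska1989optimal}): encode optimality through the dynamic programming principle and extract the feedback from the sub-differential of the value function. The central object is the Lipschitz auxiliary function
\[
\varphi(t) := v(y(t)) + \int_0^t \ell(y(s))\,ds,
\]
well-defined on $[0,t_x(\alpha)]$ since $v$ is $\overline\ell$-Lipschitz (from $|\nabla v|=\ell$ and boundedness of $\ell$) and $|\dot y|\leq 1$. The DPP for \eqref{eq: control representation v} gives $\varphi(t)\geq v(x)$ for every admissible $\alpha$ and $t\in[0,t_x(\alpha)]$, with equality for every such $t$ if and only if $\alpha$ is optimal. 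Hence optimality is equivalent to $\varphi'(t)=0$ a.e.

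The key analytic tool is a chain-rule identity: at every $t$ where $\dot y(t)$ exists and $v\circ y$ is differentiable, every $p\in D^{-}v(y(t))$ satisfies
\[
(v\circ y)'(t) \,=\, p\cdot \dot y(t).
\]
This follows by applying the defining inequality of $D^-v$ along $y(\cdot)$ and dividing alternately by $h>0$ and $h<0$ before passing to the limit, using that $y$ is Lipschitz. A preliminary remark is that any optimal $\alpha$ must satisfy $|\dot y(t)|=1$ a.e.\ on $(0,t_x(\alpha))$: since $\ell>0$ strictly on $\R^n\setminus\mathfrak M$ by \eqref{elle}, reparametrizing to unit speed strictly lowers the cost whenever $|\alpha|<1$ on a positive-measure set.

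Forward implication: let $\alpha$ be optimal and pick a time $t$ at which $\dot y(t)$ exists, $v$ is differentiable at $y(t)$, and $\varphi'(t)=0$; such $t$ fill a full-measure subset of $(0,t_x(\alpha))$. Setting $p:=\nabla v(y(t))\in D^{-}v(y(t))$, the eikonal equation gives $|p|=\ell(y(t))$. The chain rule combined with $\varphi'(t)=0$ yields $p\cdot\dot y(t)=-\ell(y(t))=-|p|$, and together with $|\dot y(t)|=1$ this is the Cauchy--Schwarz equality case, forcing $\dot y(t)=-p/|p|$, which is \eqref{eq: DI}. Converse: suppose $\dot y(t)=-p_t/|p_t|$ with $p_t\in D^{-}v(y(t))$ for a.e.\ $t$. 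The chain rule gives $(v\circ y)'(t)=p_t\cdot\dot y(t)=-|p_t|$, while the viscosity supersolution property provides $|p_t|\geq\ell(y(t))$, so $\varphi'(t)=-|p_t|+\ell(y(t))\leq 0$. Integrating on $[0,t_x(\alpha)]$ and using $v\equiv 0$ on $\mathfrak M$ yields $\int_0^{t_x(\alpha)}\ell(y(s))\,ds\leq v(x)$; the reverse inequality is DPP, so equality holds and $\alpha$ is optimal.

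The main obstacle I foresee is measure-theoretic: ensuring that the optimal curve $y(\cdot)$ meets the (Lebesgue-full-measure) set of differentiability points of $v$ for \emph{a.e.}\ $t\in(0,t_x(\alpha))$, which is what guarantees non-emptiness of $D^-v(y(t))$ along the trajectory in the forward implication. Rademacher's theorem alone is not enough, since a Lipschitz curve can in principle concentrate on a null set; this is precisely the delicate point that the classical feedback-synthesis arguments of \cite{bardi2008optimal, berkovitz1989optimal, frankowska1989optimal} are designed to handle, typically by exploiting the unit-speed structure of optimal trajectories and the non-degeneracy of the dynamics $\dot y=\alpha$ on $|\alpha|\leq 1$.
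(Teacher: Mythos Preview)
Your overall architecture matches the paper's: the auxiliary function $\varphi(t)=v(y(t))+\int_0^t\ell(y(s))\,ds$ is exactly the $h$ of the paper, and the characterization ``$\alpha$ optimal $\Leftrightarrow h'\le 0$ a.e.'' is the starting point there as well. Your two--sided chain rule
\[
(v\circ y)'(t)=p\cdot\dot y(t)\qquad\text{for every }p\in D^{-}v(y(t))
\]
is correct (your proof by dividing by $h>0$ and $h<0$ is fine) and is in fact slightly sharper than what the paper extracts via lower Dini derivatives and \cite[Lemma~2.37, Lemma~2.50]{bardi2008optimal}. The sufficiency direction is essentially identical to the paper's.

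The genuine gap is in the forward implication, and it is not quite the obstacle you diagnose. From your chain rule and $\varphi'(t)=0$ you obtain, for any $p\in D^{-}v(y(t))$,
\[
-p\cdot\dot y(t)=\ell(y(t)),\qquad |\dot y(t)|\le 1,\qquad |p|\ge \ell(y(t))\ \text{(supersolution)}.
\]
These three facts alone do \emph{not} force $\dot y(t)=-p/|p|$: if $|p|>\ell(y(t))$ strictly, Cauchy--Schwarz gives no contradiction. What is needed is the \emph{equality} $|p|=\ell(y(t))$ for every $p\in D^{-}v(y(t))$, and you obtain it by taking $p=\nabla v(y(t))$, i.e.\ by assuming $v$ is differentiable at $y(t)$ for a.e.\ $t$. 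That is the step you cannot justify, and the paper does \emph{not} resolve it by a measure--theoretic argument about differentiability along curves.

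Instead, the paper bypasses the issue entirely by invoking the \emph{bilateral supersolution} property of the value function \cite[Prop.~5.3, p.~344]{bardi2008optimal}: for the value function of this exit--time problem one has $|p|-\ell(x)=0$ for \emph{every} $p\in D^{-}v(x)$ and every $x\notin\mathfrak{M}$, with no differentiability assumption on $v$. This is a structural fact about value functions (it encodes the ``backward'' sub--optimality principle) and is strictly stronger than the mere supersolution inequality. Once you have it, your own chain rule finishes the job immediately: $|p|=\ell(y(t))=-p\cdot\dot y(t)\le |p|\,|\dot y(t)|\le|p|$ forces equality throughout and hence $\dot y(t)=-p/|p|$ for every $p\in D^{-}v(y(t))$. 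So the missing ingredient is not a delicate measure--theoretic lemma about Lipschitz curves, but a single citation to the bilateral supersolution property; with it, your argument becomes complete and essentially coincides with the paper's.
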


\begin{proof}
By the dynamic programming principle, the function
\begin{equation}\label{eq: def h proof deter}
    h(t)\coloneqq v(y_{x}^{\alpha}(t)) + \int_{0}^{t} \ell(y_{x}^{\alpha}(s))\text{d}s,\quad 0\leq t\leq t_{x}(\alpha)
\end{equation}
is non-decreasing for all $\alpha$, and non-increasing (hence constant) if and only if $\alpha$ is optimal. And since $h$ is locally Lipschitz, we get
\begin{equation*}
    \alpha\,\text{ is optimal }\, \text{ if and only if }\;\; h'(t)\leq 0\; \text{ a.e. } t.
\end{equation*}
\textit{Proof of Necessity. } Assume $\alpha$ is optimal, and so $h'\leq 0$. Let $y(\cdot):=y_{x}^{\alpha}(\cdot)$.\\
{Claim 1. } $p\cdot \dot{y}(t) + \ell(y(t))\leq 0$ for all $p\in D^{-}v(y(t))$ a.e. $t$.

Let $\partial^{-}v(x;q)$ be the lower Dini derivative at $x$ in the direction $q$ (see equation (2.47) in \cite[page 125]{bardi2008optimal}). Then by \cite[Lemma 2.50, p. 135]{bardi2008optimal}, one has
\begin{equation*}
    \partial^{-}(v\circ y)(s;1) = \partial^{-}v(y(s);\dot{y}(s))
\end{equation*}
and for almost every $t$, $h'(t) = \partial^{-}v(y(t);\dot{y}(t)) + \ell(y(t))$. Next, using \cite[Lemma 2.37, p. 126]{bardi2008optimal}, one has, for any $z\in\mathds{R}^{n}$,
\begin{equation*}
    D^{-}v(z) = \{\,p\,:\;p\cdot q \leq \partial^{-}v(z;q),\;\forall\,q\in\mathds{R}^{n}\},
\end{equation*}
and hence, for almost every $t$ and for all $p\in D^{-}v(y(t))$, 
\begin{equation*}
    p\cdot \dot{y}(t) + \ell(y(t))\leq \partial^{-}v(y(t);\dot{y}(t)) + \ell(y(t)) = h'(t)\leq 0 .
\end{equation*}
{Claim 2. } $\dot{y}(t) = -\frac{p}{|p|}$ for all $p\in D^{-}v(y(t))$, a.e. $t$.

By \cite[Proposition 5.3, p. 344]{bardi2008optimal}, $v$ is a bilateral supersolution of $|Dv(x)|-\ell(x) = 0$ in $\mathds{R}^{n}\setminus\mathfrak{M}$, i.e. $|p|-\ell(x)=0$ for all $p\in D^{-}v(x)$. This implies in particular that $p\neq 0$ if $x\notin \mathfrak{M}$. Hence, and using claim 1 together with $\dot{y}\in B(0,1)$, one gets
\begin{equation*}
    |p|=\ell(y(t)) \leq -p\cdot \dot{y}(t)\leq |p| ,
\end{equation*}
that is, $\dot{y}(t) = -
{p}/{|p|}$.

\textit{Proof of sufficiency. } 
By the non-smooth calculus rule just recalled, for a.e. $t$, 
\begin{equation*}
    h'(t) = - \partial^{-}v(y(t);-\dot{y}(t)) + \ell(y(t)) \leq -p\cdot(-\dot{y}(t)) + \ell(y(t)),\quad \forall\,p\in D^{-}v(y(t)).
\end{equation*}
Then, if we assume $y(\cdot)$ solves \eqref{eq: DI}, 
\begin{equation*}
    h'(t) \leq -p\cdot\frac{p}{|p|} + \ell(y(t)) = -|p|+\ell(y(t)) \leq 0
\end{equation*}
because $v$ is a supersolution of $|Dv|-\ell = 0$ and $p\in D^{-}v(y(t))$. 
\end{proof}

\begin{rmk}
Combining Theorem \ref{thm: existence control deter} and Theorem \ref{thm: characterization optimal trajectory deter}, the differential inclusion \eqref{eq: DI} has at least a solution and all such solutions are optimal.
\end{rmk}

We recall the definition of limiting gradient of a Lipschitz function 
\begin{equation*}
    D^{*}v(z)  \coloneqq \{ \,p\,:\; p=\lim\limits_{n\to +\infty} Dv(x_{n})\;\text{ for some }\, x_{n}\to z \}
\end{equation*}
and the super-differential of a continuous function
\begin{equation*}
	D^{+}v(z)  \coloneqq \left\{ \, p \, :\; \limsup\limits_{x\to z}
	\frac{v(x) - v(z) - p\cdot(x-z)}{|x-z|} \leq 0 \right\}.
\end{equation*}


\begin{thm} 
\label{ode}
{ Assume  \eqref{elle}}. The following necessary and sufficient conditions { of optimality} hold.
\begin{enumerate}[label = (\Roman*)]
    \item 
    { If $y(\cdot)$ is optimal, then }
\begin{enumerate}[label = (\roman*)]
    \item $\dot{y}(t)=-\frac{p}{|p|}$, for all $p\in D^{+}v(y(t)),\, p\neq 0$ and almost all $t\in(0,t_{x}(\a^*))$,
    \item $|p|=\ell(y(t))$, for all $p\in D^{+}v(y(t))$ and all $t\in(0,t_{x}(\a^*))$,
  {  \item $D^{+}v(y(t))$ is a singleton for all $t\in(0,t_{x}(\a^*))$.
\item If $\ell(x)=\sqrt{ 2(f(x) - \underline{f}) }$ and  assumptions (A) and (B) are satisfied,
then $v$ is differentiable at all points 
 $y(t)$ with 
  $t\in(0,t_{x}(\a^*))$ and } 
\begin{equation}
\label{descent}
    \dot{y}(t)= - \frac{Dv(y(t))}{|Dv(y(t))|},\quad \forall \,t\in (0,t_{x}(\a^*)).
\end{equation}
\end{enumerate}

    \item A sufficient condition for the optimality of $y(\cdot)$ is 
\begin{equation}\label{eq: DI 2}
    \dot{y}(t) \in - \left\{ \,\frac{p}{|p|}\;:\;p\in D^{*}v(y(t))\cap D^{+}v(y(t)),\,p\neq 0 \right\} ,\, \text{ a.e. } t.
\end{equation}
\end{enumerate}
\end{thm}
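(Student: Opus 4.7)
The plan is to exploit the dynamic programming identity
\[
v(y(s))-v(y(t)) \;=\; \int_{s}^{t}\ell(y(\tau))\,d\tau, \quad 0\leq s\leq t\leq t_{x}(\alpha^{*}),
\]
valid along any optimal trajectory (cf.\ the function $h$ in the proof of Theorem~\ref{thm: characterization optimal trajectory deter}), combined with the viscosity subsolution property $|p|\leq\ell(x)$ for $p\in D^{+}v(x)$ and equality cases of Cauchy--Schwarz. Fix $t\in(0,t_{x}(\alpha^{*}))$ and $p\in D^{+}v(y(t))$: inserting $z=y(t-h)$ in the definition of super-differential, using the identity above together with continuity of $\ell\circ y$ and $|\dot y|\leq 1$, one gets as $h\to 0^{+}$
\[
h\,\ell(y(t))+o(h) \;=\; \int_{t-h}^{t}\ell(y(\tau))\,d\tau \;\leq\; -p\cdot(y(t)-y(t-h))+o(h) \;\leq\; |p|\,h+o(h).
\]
Dividing by $h$ and letting $h\to 0^{+}$ yields $\ell(y(t))\leq|p|$, and combined with the subsolution bound gives $|p|=\ell(y(t))$, proving (I)(ii).

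Once (ii) is known, the above inequality chain collapses, forcing asymptotic equality in Cauchy--Schwarz; hence $(y(t)-y(t-h))/h \to -p/|p|$ as $h\to 0^{+}$. A symmetric argument with $z=y(t+h)$ gives $(y(t+h)-y(t))/h \to -p/|p|$, so $y$ is classically differentiable at such $t$ with $\dot y(t)=-p/|p|$; this settles (I)(i). The identification $p=-\ell(y(t))\dot y(t)$ also shows that $D^{+}v(y(t))\setminus\{0\}$ contains at most one element, so (I)(iii) reduces to the non-emptiness of $D^{+}v(y(t))$, which follows from classical differentiability properties of value functions along interior points of optimal trajectories (cf.~\cite{cannarsa2004semiconcave}). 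Under the additional hypotheses in (I)(iv), Lemmas~\ref{lem: semiconcavity - lambda}--\ref{lem: semiconcavity} together with Theorems~\ref{thm: ergodic 2}--\ref{thm: ergodic 1} make $v$ semi-concave, hence $D^{+}v(x)\ne\emptyset$ everywhere; together with (iii) and with the non-emptiness of $D^{-}v(y(t))$ from Theorem~\ref{thm: characterization optimal trajectory deter}, the well-known principle that for semi-concave functions the simultaneous non-emptiness of $D^{-}v(z)$ and $D^{+}v(z)$ forces differentiability at $z$ gives $v$ differentiable at $y(t)$ with $Dv(y(t))=-\ell(y(t))\dot y(t)$, i.e.,~\eqref{descent}.

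For (II), set $g(t):=v(y(t))+\int_{0}^{t}\ell(y(\tau))\,d\tau$. Any $p\in D^{*}v(y(t))$ is a limit $p=\lim_{n}Dv(x_{n})$ with $x_{n}\to y(t)$; at each differentiability point the viscosity equation forces $|Dv(x_{n})|=\ell(x_{n})$, and continuity of $\ell$ yields $|p|=\ell(y(t))$. The super-differential version of the non-smooth chain rule (dual to the one used in the proof of Theorem~\ref{thm: characterization optimal trajectory deter}) gives $g'(t)\leq p\cdot\dot y(t)+\ell(y(t))$ for every $p\in D^{+}v(y(t))$. Plugging in $\dot y(t)=-p/|p|$ with $p\in D^{*}v(y(t))\cap D^{+}v(y(t))$, $p\ne 0$, as prescribed by~\eqref{eq: DI 2}, gives $g'(t)\leq -|p|+\ell(y(t))=0$ a.e., so $g$ is non-increasing; since $v(y(t_{x}(\alpha)))=0$ this forces $\int_{0}^{t_{x}(\alpha)}\ell(y(s))\,ds\leq v(x)$, which together with the definition of $v$ yields optimality. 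The main technical obstacle throughout is the non-emptiness of $D^{+}v(y(t))$ in (I)(iii) under the sole assumption~\eqref{elle}, which rests on the regularity results for value functions along optimal trajectories alluded to above.
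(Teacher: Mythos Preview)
Your strategy matches the paper's closely: exploit the dynamic programming identity along optimal trajectories, the viscosity subsolution bound $|p|\le\ell$ for $p\in D^+v$, and the non-smooth chain rule. Two slips deserve mention.

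First, in (I)(i) your ``symmetric argument with $z=y(t+h)$'' does not go through. In the forward direction the superdifferential inequality reads $v(y(t+h))-v(y(t))\le p\cdot(y(t+h)-y(t))+o(h)$; combined with the DP identity this yields only $-p\cdot(y(t+h)-y(t))/h\le\ell(y(t))+o(1)$, which is already implied by $|p|=\ell(y(t))$ and Cauchy--Schwarz, hence gives no information on the forward difference quotient. Your claim that $y$ is classically differentiable at every such $t$ is therefore unsupported. This does not harm (I)(i), which is asserted only a.e., and your backward-increment argument alone suffices at points where $\dot y$ exists. The paper reaches the same conclusion via upper Dini derivatives and the characterization $D^+v(z)=\{p:p\cdot q\ge\partial^+v(z;q)\ \forall q\}$.

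Second, your route to (I)(iii) via the identification $p=-\ell(y(t))\dot y(t)$ relies on (I)(i) and hence holds only a.e., whereas the statement asks for all $t\in(0,t_x(\alpha^*))$. The paper argues instead directly from (I)(ii): since every $p\in D^+v(y(t))$ lies on the sphere $|p|=\ell(y(t))>0$ and $D^+v(y(t))$ is convex, it contains at most one point, and this works for every $t$. You are right to flag non-emptiness as the residual issue; the paper's proof does not spell it out either. Similarly in (I)(iv), rather than invoking non-emptiness of $D^-v(y(t))$ (which Theorem~\ref{thm: characterization optimal trajectory deter} provides only a.e.), the paper uses that semiconcavity forces $D^+v\ne\emptyset$ everywhere and that a semiconcave function with singleton superdifferential is differentiable there (\cite[Prop.~II.4.7(c)]{bardi2008optimal}). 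Part (II) coincides with the paper's proof.
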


\begin{proof}
To prove {\it (I.i)} we take $h$ defined by \eqref{eq: def h proof deter} and let $\partial^{+}v(x;q)$ be the upper Dini derivative of $v$ in direction $q$, with $|q|=1$.\\
{Claim 1. } $p\cdot\dot{y}(t) + \ell(y(t))\leq 0$,\, for all $p\in D^{*}v(y(t))$, a.e. $t$.

Using \cite[Lemma 2.37, p. 126]{bardi2008optimal}, one has, for any $z\in\mathds{R}^{n}$ 
\begin{equation*}
    D^{+}v(z) = \{\,p\;:\;p\cdot q \geq \partial^{+}v(z;q),\; \forall\,q\in\mathds{R}^{n}  \}.
\end{equation*}
Hence, for $p\in D^{+}v(y(t))$, one has
\begin{equation*}
    p\cdot \dot{y}(t) + \ell(y(t)) = -p\cdot (-\dot{y}(t)) + \ell(y(t)) \leq -\partial^{+}v(y(t);-\dot{y}(t)) + \ell(y(t)).
\end{equation*}
But, as in Claim 1 in the proof of Theorem \ref{thm: characterization optimal trajectory deter}, and since $y$ is optimal, one gets
\begin{equation*}
    -\partial^{+}v(y(t);-\dot{y}(t)) + \ell(y(t)) = h'(t)\leq 0,
\end{equation*}
which proves the claim.\\
{Claim 2. } $\dot{y}(t) = -\frac{p}{|p|}$ for all $p\in D^{+}v(y(t)), p\neq 0$, a.e. $t$.

Recalling $|\dot{y}|\in B(0,1)$ and $v$ being a subsolution of $|Dv|-\ell=0$, we have for all $p\in D^{+}v(y(t))$, $|p|\leq \ell(y(t))\leq -p\cdot \dot{y}(t) \leq |p|$, and hence, either $p=0$ or $\dot{y}(t) = -\frac{p}{|p|}$. 


To prove {\it (I.ii)} we use the fact that $h$ is non-increasing if and only if $y(\cdot)$ is optimal. Hence, for { $t>0$ and}  $\tau>0$ small, one has
\begin{equation*}
{    \begin{aligned}
         h(t) - h(t-\tau) \leq 0 &\Rightarrow v(y(t))-v(y(t-\tau)) + \int_{t-\tau}^{t}\ell(y(s))\text{d}s \leq 0\\
 	& \Rightarrow v(y(t))-v(y(t-\tau)) \leq -\ell(y(t))\tau + o(\tau) .
    \end{aligned}
    }
\end{equation*}
Recalling the definition of $p\in D^{+}v(y(t))$, one has
\begin{equation*}
 { \begin{aligned}
    & v(y(t)) - v(y(t-\tau)) \geq p\cdot (y(t) - y(t-\tau) ) + o(\tau)\\ 
\Rightarrow \; &    v(y(t)) - v(y(t-\tau)) \geq \int_{t-\tau}^{t} p\cdot \alpha(s)\text{d}s  + o(\tau)  
  \geq -|p|\tau + o(\tau) ,
\end{aligned}
}
\end{equation*}
and together with the previous inequality this yields
\begin{equation*}
	|p| \geq \ell(y(t)),\quad \forall\, t\in (0,t_{x}(\alpha^*)).
\end{equation*}
The other inequality is a direct consequence of $p$ being in $D^{+}v(y(t))$ and $v$ a subsolution. This concludes the proof of statement \textit{(I.ii)}.

{ The property \textit{(I.iii)} follows immediately from the equality $|p| = \ell(y(t))$ for all $p\in D^{+}v(y(t))$ and the convexity of the set $D^{+}v(y(t))$.

Under the additional conditions of   \textit{(I.iv)}, $v$ is semiconcave thanks to Lemma \ref{lem: semiconcavity - lambda} (or Lemma \ref{lem: semiconcavity}). 
This implies that $v$ is differentiable at all points where the superdifferential is a singleton 
(see, e.g.,  \cite[Proposition II.4.7 (c), p. 66]{bardi2008optimal}), and then at all $y(t)$ with $t\in (0,t_{x}(\alpha^{*}))$. Hence, \eqref{eq: DI} becomes \eqref{descent}.
}

To prove {\it (II)} note that at all points of differentiability of $v$, one has $|Dv(z)| = \ell(z)$. Then for all $p\in D^{*}v(z)$, $|p|=\ell(z)$. And one has
\begin{equation*}
    h'(t) = \partial^{+}v(y(t);\dot{y}(t)) + \ell(y(t)) \leq p\cdot \dot{y}(t) + \ell(y(t)),\quad \forall\,p\in D^{+}v(y(t)).
\end{equation*}
Then, for $y$ solving \eqref{eq: DI 2}, $p\neq 0$
\begin{equation*}
    h'(t)\leq -p\cdot\frac{p}{|p|} + \ell(y(t)) = -|p|+\ell(y(t)) = 0
\end{equation*}
which concludes the proof as it has been done for Theorem \ref{thm: characterization optimal trajectory deter}.
\end{proof}


\subsection{Convergence of optimal trajectories to the argmin} 
\label{sec: conv traj}

In order to show stability of $\mathfrak{M}$, we need an assumption which prevents $\ell(\cdot)$ from approaching $0$ when $\text{dist}(x,\mathfrak{M})\to \infty$,  
that is,
\begin{itemize}
    \item for all $\delta>0$, there exists  $\gamma=\gamma(\delta)>0$ such that
        \begin{equation}
        \label{eq: assumption for stab}
        \tag{H}
             \inf\{\ell(x)\,:\,\text{dist}(x,\mathfrak{M})\,> \delta\} \,>\, \gamma(\delta).
        \end{equation}
\end{itemize}
If $\M$ is bounded, then it is easy to see that this condition is equivalent to
\[
\liminf_{|x|\to\infty} \ell(x)>0 ,
\]
which is also equivalent to { Assumption (A3) in \cite{ishii2020vanishing}, Assumption (L3)-(3.2) in \cite{cannarsa2022asymptotic}, and } Assumption (L3) in \cite
{cannarsa2020asymptotic}. The last inequality, however, is impossible when $\M$ is unbounded.

\begin{rmk}
An example of function 
with  a unique global minimizer that does not satisfy hypothesis \eqref{eq: assumption for stab} is 
$\ell(x)=|x|e^{-x^2}.$ In this case  $\mathfrak{M}=\{0\}$ and $\inf\{
\ell(x) : 
|x|>\d\} = 0$ for all $\d$.
\end{rmk}

A direct consequence of Theorem \ref{thm: characterization optimal trajectory deter} is the following result.

\begin{corollary}\label{cor: lower bound of rho}
Assume the conditions { \eqref{elle}} and  \eqref{eq: assumption for stab}. 
Let $y_{x}^{\alpha^{*}}(\cdot)$ be an optimal trajectory 
and $\delta>0$. If there exists $\tau>0$ such that $\text{dist}(y^{*}(\tau),\mathfrak{M})>\delta$, then, for $\gamma(\cdot)$ defined in \eqref{eq: assumption for stab},
\begin{equation}
    \rho^{\gamma(\delta/2)}(t) \geq \frac{\delta}{t},\quad\quad \forall\,t>\tau + \frac{\delta}{2} .
\end{equation}
\end{corollary}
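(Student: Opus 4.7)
The plan is to combine the characterization of optimal trajectories from Theorem \ref{thm: characterization optimal trajectory deter} with the coercivity condition \eqref{eq: assumption for stab}. The key observation is that an optimal trajectory moves at unit speed off of $\mathfrak{M}$, which turns the hypothesis $\mathrm{dist}(y^*(\tau),\mathfrak{M}) > \delta$ into a lower bound on distance (hence on $\ell$) throughout an interval of length $\delta$ centered at $\tau$.

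First I would establish the unit-speed property of $y^{*} := y_{x}^{\alpha^{*}}$. By Theorem \ref{thm: characterization optimal trajectory deter}, the differential inclusion \eqref{eq: DI} gives $\dot{y}^{*}(s) = -p/|p|$ for some $p \in D^{-}v(y^{*}(s))$, and by Claim 2 in the proof of that theorem, $|p| = \ell(y^{*}(s)) > 0$ whenever $y^{*}(s) \notin \mathfrak{M}$. Hence $|\dot{y}^{*}(s)| = 1$ for a.e.\ $s \in (0, t_{x}(\alpha^{*}))$, and in particular $y^{*}$ is $1$-Lipschitz, so $|y^{*}(s) - y^{*}(\tau)| \leq |s - \tau|$.

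Next, I propagate the hypothesis by the triangle inequality:
\begin{equation*}
\mathrm{dist}(y^{*}(s), \mathfrak{M}) \geq \mathrm{dist}(y^{*}(\tau), \mathfrak{M}) - |y^{*}(s) - y^{*}(\tau)| > \delta - |s - \tau|,
\end{equation*}
so $\mathrm{dist}(y^{*}(s), \mathfrak{M}) > \delta/2$ for every $s$ with $|s - \tau| < \delta/2$. Note that this already forces $t_{x}(\alpha^{*}) > \tau + \delta/2$, so the unit-speed property used above is valid throughout the relevant interval. Applying \eqref{eq: assumption for stab} at level $\delta/2$ then yields $\ell(y^{*}(s)) > \gamma(\delta/2)$, i.e.\ $y^{*}(s) \notin K_{\gamma(\delta/2)}$, for every such $s$. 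For $t > \tau + \delta/2$, the interval $[\tau - \delta/2, \tau + \delta/2]$ has length $\delta$ and, provided $\tau \geq \delta/2$, sits entirely in $[0, t]$, so
\begin{equation*}
\rho^{\gamma(\delta/2)}(t) = \frac{1}{t}\big|\{s \in [0,t] : y^{*}(s) \notin K_{\gamma(\delta/2)}\}\big| \geq \frac{\delta}{t}.
\end{equation*}

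The argument is essentially routine once the unit-speed property is in hand, and no deep estimate or compactness is required. The only mildly delicate point is whether the centered interval of length $\delta$ fits inside $[0, t]$ when $\tau < \delta/2$; in that case the same reasoning produces the slightly weaker bound $\rho^{\gamma(\delta/2)}(t) \geq (\tau + \delta/2)/t$, which is still sufficient for the intended downstream application to convergence of optimal trajectories to $\mathfrak{M}$. So the main obstacle is really just the careful bookkeeping of where the unit-speed interval lives relative to $[0,t]$, and then reading off the lower bound from the definition of $\rho^{\delta}$.
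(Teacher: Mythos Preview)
Your proof is correct and follows essentially the same route as the paper: use the $1$-Lipschitz property of $y^{*}$ coming from \eqref{eq: DI}, propagate the distance bound by the triangle inequality to the interval $(\tau-\delta/2,\tau+\delta/2)$, apply \eqref{eq: assumption for stab}, and read off the lower bound on $\rho^{\gamma(\delta/2)}(t)$. The paper only uses $|\dot y^{*}|\leq 1$ rather than the full unit-speed property, which is all that is needed; your observation about the edge case $\tau<\delta/2$ is a point the paper's proof silently passes over as well.
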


\begin{proof}
Set $y^{*}(\cdot)\coloneqq y_{x}^{\alpha^{*}}(\cdot)$.  Since it 
 satisfies \eqref{eq: DI}, we have $|\dot{y}^{*}(\cdot)|\leq 1$ and hence $y^{*}(\cdot)$ is Lipschitz continuous. Therefore, given $\delta>0$, if there exists $\tau>0$ such that $\text{dist}(y^{*}(\tau),\mathfrak{M})>\delta$, then
\begin{equation*}
    \begin{aligned}
        \delta < \text{dist}(y^{*}(\tau),\mathfrak{M}) & \leq \text{dist}(y^{*}(s),\mathfrak{M}) + |y^{*}(s)-y^{*}(\tau)|\\
        & \leq \text{dist}(y^{*}(s),\mathfrak{M}) + |s-\tau|\\
    \end{aligned}
\end{equation*}
which yields
\begin{equation*}
    \text{dist}(y^{*}(s),\mathfrak{M})\,>\,\frac{\delta}{2},\quad \forall\,s\in ]\tau-\delta/2,\tau+\delta/2[.
\end{equation*}
Hence one has
\begin{equation*}
    \ell(y^{*}(s))\geq \inf\left\{\ell(x)\,:\,\text{dist}(x,\mathfrak{M})\,> \frac{\delta}{2}\right\},\quad \forall\,s\in ]\tau-\delta/2,\tau+\delta/2[ ,
\end{equation*}
and together with \eqref{eq: assumption for stab}, one gets
\begin{equation}\label{eq: lower bound of l in proof}
    \ell(y^{*}(s))> \gamma(\delta/2),\quad \forall\,s\in ]\tau-\delta/2,\tau+\delta/2[ .
\end{equation}
Therefore 
\begin{equation*}
    |\{s\in [0,t]\,:\, y^{*}(s) \notin K_{\gamma(\delta/2)}\}| \geq |\,]\tau-\delta/2,\tau+\delta/2[\,|,\quad \forall\,t>\tau +\frac{\delta}{2}.
\end{equation*}
The latter writes as
\begin{equation*}
    t\,\rho^{\gamma(\delta/2)}(t) \geq \delta
\end{equation*}
and concludes the proof.
\end{proof}

We are now ready to show stability properties of the set of global minimizers $\mathfrak{M}$ with respect to the optimal trajectories $y_{x}^{\alpha^{*}}(\cdot)$. 

\begin{thm}
\label{thm: stability of M}
Assume { \eqref{elle} and \eqref{eq: assumption for stab} hold.} 
Then for $y^{*}(\cdot)$ as in \eqref{eq: DI}, 
\begin{enumerate}[label = (\roman*)]
    \item $\mathfrak{M}$ is Lyapunov stable
    \footnote{This means that 
    $\forall\,\varepsilon>0,\;\exists\,\eta>0$ such that $\text{dist}(x,\mathfrak{M})\leq \eta \Rightarrow \text{dist}\left(y_{x}^{\alpha^{*}}(t),\mathfrak{M}\right)\leq \varepsilon$, $\forall\,t\geq0$.},
    \item $\mathfrak{M}$ is globally asymptotically stable
    \footnote{This means that $\M$ is Lyapunov stable and $\lim\limits_{t\to+\infty}\text{dist}\left(y_{x}^{\alpha^{*}}(t),\mathfrak{M}\right) = 0$ for all $x\in\mathds{R}^{n}$.}.
\end{enumerate}
\end{thm}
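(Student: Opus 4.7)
My approach is to use the value function $v$ itself as a Lyapunov function for the optimal-trajectory flow. Two ingredients are already in hand. From the proof of Theorem \ref{thm: characterization optimal trajectory deter} (constancy of $h$ in \eqref{eq: def h proof deter}) the dynamic programming identity along an optimal trajectory reads
\[
v(y^{*}(t)) + \int_{0}^{t} \ell(y^{*}(s))\,\mathrm{d}s = v(x), \qquad \forall\, t \geq 0,
\]
so in particular $t \mapsto v(y^{*}(t))$ is non-increasing and $\int_{0}^{\infty} \ell(y^{*}(s))\,\mathrm{d}s \leq v(x) < \infty$. From the proof of Theorem \ref{thm: conv time average} one also has the upper bound $v(z) \leq \bar{\ell}\,\text{dist}(z,\mathfrak{M})$.

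For \textit{(i)} I would establish a matching lower bound: under \eqref{eq: assumption for stab}, if $\text{dist}(z,\mathfrak{M}) > \varepsilon$ then the speed constraint $|\dot{y}| \leq 1$ forces every admissible trajectory starting at $z$ to satisfy $\text{dist}(y(s),\mathfrak{M}) > \varepsilon/2$ on $[0,\varepsilon/2)$, hence $\ell(y(s)) > \gamma(\varepsilon/2)$ there. Taking the infimum over $\alpha$ yields $v(z) \geq (\varepsilon/2)\,\gamma(\varepsilon/2) =: c_{\varepsilon}$. Given $\varepsilon > 0$, choosing $\eta < c_{\varepsilon}/\bar{\ell}$ then produces $v(y^{*}(t)) \leq v(x) \leq \bar{\ell}\,\eta < c_{\varepsilon}$, and the contrapositive of the lower bound gives $\text{dist}(y^{*}(t),\mathfrak{M}) \leq \varepsilon$ for all $t \geq 0$.

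For \textit{(ii)}, given Lyapunov stability it suffices to show $\text{dist}(y^{*}(t),\mathfrak{M}) \to 0$. I would argue by contradiction: if not, there exist $\delta > 0$ and, up to extraction, a sequence $t_{n} \to \infty$ with $t_{n+1} > t_{n} + \delta$ and $\text{dist}(y^{*}(t_{n}),\mathfrak{M}) > \delta$. The Lipschitz bound on $y^{*}$ makes the intervals $I_{n} := (t_{n} - \delta/2,\, t_{n} + \delta/2)$ pairwise disjoint and forces $\ell(y^{*}(s)) > \gamma(\delta/2)$ on each of them, so
\[
\int_{0}^{\infty} \ell(y^{*}(s))\,\mathrm{d}s \;\geq\; \sum_{n} \int_{I_{n}} \ell(y^{*}(s))\,\mathrm{d}s \;\geq\; \sum_{n} \delta\,\gamma(\delta/2) \;=\; +\infty,
\]
contradicting the finiteness above.

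The only step that calls for genuine input is the lower bound $v(z) \geq c_{\varepsilon}$ whenever $\text{dist}(z,\mathfrak{M}) > \varepsilon$, and this is exactly where \eqref{eq: assumption for stab} enters the argument; the rest is a direct consequence of the dynamic programming identity and the unit-speed bound on $\dot{y}^{*}$. Note that the integrability observation used in \textit{(ii)} is strictly stronger than the time-averaged decay of Theorem \ref{thm: conv time average} and bypasses Corollary \ref{cor: lower bound of rho}.
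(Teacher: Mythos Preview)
Your proof is correct. It shares the same two underlying ingredients as the paper's argument --- the bound $\int_0^t \ell(y^*(s))\,\mathrm{d}s \leq v(x)\leq \bar\ell\,\text{dist}(x,\M)$ and the ``trap'' observation that $\text{dist}(y^*(\tau),\M)>\varepsilon$ forces $\ell(y^*(s))>\gamma(\varepsilon/2)$ on an interval of length $\varepsilon$ around $\tau$ --- but packages them differently.

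The paper routes both (i) and (ii) through the occupational measure $\rho^\delta(t)$: Theorem~\ref{thm: conv time average} gives the upper bound $t\,\rho^\delta(t)\leq \bar\ell\,\text{dist}(x,\M)/\delta$, Corollary~\ref{cor: lower bound of rho} gives the lower bound $t\,\rho^{\gamma(\varepsilon/2)}(t)\geq \varepsilon$ from a single excursion (or $\geq N(t)\varepsilon$ from many), and the contradiction in each part comes from comparing these bounds. Your approach instead uses $v$ directly as a Lyapunov function: the lower bound $v(z)\geq (\varepsilon/2)\gamma(\varepsilon/2)$ whenever $\text{dist}(z,\M)>\varepsilon$ replaces Corollary~\ref{cor: lower bound of rho} for part~(i), and the raw integrability $\int_0^\infty \ell(y^*(s))\,\mathrm{d}s<\infty$ replaces the averaged statement of Theorem~\ref{thm: conv time average} for part~(ii). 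Your route is shorter and more classical, and it makes the role of~\eqref{eq: assumption for stab} transparent (it is precisely what produces a positive lower level set for $v$); the paper's route, on the other hand, ties the stability result back to the occupational-measure viewpoint emphasised elsewhere in the paper. Note that your threshold $\eta<(\varepsilon/2)\gamma(\varepsilon/2)/\bar\ell$ differs from the paper's $\eta<\varepsilon\gamma(\varepsilon/2)/\bar\ell$ only by an inessential factor of~$2$.
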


\begin{proof}
Let $y^{*}(\cdot)\coloneqq y_{x}^{\alpha^{*}}(\cdot)$ be an optimal trajectory, i.e.,  
a solution of  \eqref{eq: DI}. We proceed by contradiction. 

\textit{Proof of (i).} Let $\varepsilon>0$ be fixed and suppose for all $\eta>0$, $\exists\,\tau>0$ such that $\text{dist}(y^{*}(\tau),\mathfrak{M})>\varepsilon$ and $\text{dist}(x,\mathfrak{M})<\eta$. Then from Corollary \ref{cor: lower bound of rho}, one has
\begin{equation*}
    \rho^{\gamma(\varepsilon/2)}(t) \geq \frac{\varepsilon}{t},\quad \forall\, t > \tau+\frac{\varepsilon}{2}.
\end{equation*}
And from Theorem \ref{thm: conv time average}, one has
\begin{equation*}
    \frac{t\,\gamma(\varepsilon/2)}{\overline{\ell}} \rho^{\gamma(\varepsilon/2)}(t) \leq \text{dist}(x,\mathfrak{M}).
\end{equation*}
Therefore one gets
\begin{equation*}
    \frac{\varepsilon\,\gamma(\varepsilon/2)}{\overline{\ell}} \leq \text{dist}(x,\mathfrak{M})
\end{equation*}
which contradicts $\text{dist}(x,\mathfrak{M})<\eta$ when we choose $\eta< \frac{\varepsilon\,\gamma(\varepsilon/2)}{\overline{\ell}}$. Hence we can conclude that, for all $\varepsilon>0$, there exists $\eta>0$ such that if $\text{dist}(x,\mathfrak{M})\leq \eta$ then $\text{dist}(y^{*}(t),\mathfrak{M})\leq \varepsilon$ for all $t$. 

\textit{Proof of (ii).} Suppose there exists a diverging sequence $\{\tau_{k}\}_{k\geq0}$ and $\varepsilon>0$ such that $\text{dist}(y^{*}(\tau_{k}),\mathfrak{M})>\varepsilon$. Without loss of generality, one can extract a subsequence (again denoted by $\tau_{k}$) such that $\tau_{k+1}-\tau_{k}\geq \varepsilon$. Using Corollary \ref{cor: lower bound of rho}, in particular \eqref{eq: lower bound of l in proof}, one has for all $k\geq 0$
\begin{equation*}
    \ell(y^{*}(s))\geq \gamma(\varepsilon/2),\quad \forall\,s\in ]\tau_{k}-\varepsilon/2,\tau_{k}+\varepsilon/2[
\end{equation*}
and therefore
\begin{equation*}
    |\{s\in [0,t]\,:\, y^{*}(s) \notin K_{\gamma(\varepsilon/2)}\}|\, > \sum\limits_{\{k \geq 0\,:\,\tau_{k} \leq t-\frac{\varepsilon}{2}\}} |\,]\tau_{k}-\varepsilon/2,\tau_{k}+\varepsilon/2[\,|
    = N(t)\,\varepsilon,
\end{equation*}
where $N(t)$ is the number of distinct elements $\{\tau_{k}\}_{k\geq0}$ that are in $[0,t+\varepsilon/2]$, i.e. 
\begin{equation*}
    N(t) \coloneqq \#\{\tau_{k}\,:\, \tau_{k}\leq t+\varepsilon/2,\;k\geq 0\}.
\end{equation*}
The previous inequality writes as
\begin{equation*}
    t\rho^{\gamma(\varepsilon/2)}(t)\,> N(t)\,\varepsilon.
\end{equation*}
On the other hand, we know from Theorem \ref{thm: conv time average}, in particular \eqref{eq: thm long time average deter}, that
\begin{equation*}
    t\rho^{\gamma(\varepsilon/2)}(t) \leq \frac{\overline{\ell}\,\text{dist}(x,\mathfrak{M})}{\gamma(\varepsilon/2)},
\end{equation*}
and so we have $N(t) < \frac{\overline{\ell}\,\text{dist}(x,\mathfrak{M})}{\varepsilon\,\gamma(\varepsilon/2)}$. But this cannot be true since $N(t)\to +\infty$ as $t\to+\infty$, and hence it concludes the proof.
\end{proof}

\subsection{On reaching the argmin 
 in finite time}
 \label{sec: finite time}

{ Here we investigate whether the hitting time $t_{x}(\alpha^{*})$ of an optimal trajectory with the target $\M$ is finite or not. In view of the gradient descent inclusion \eqref{grad_desc}, or its smooth version \eqref{descent}, the question is equivalent to the finite length of the orbits of the gradient flow $\dot y \in - D^- v(y)$, or $\dot y = -\nabla v(y)$. This is a classical problem with a large literature. Positive results require strong regularity of $v$, such as quasiconvexity and subanaliticity \cite{bolte2007lojasiewicz}. On the other hand, counterexamples are known for $v\in C^\infty(\R^2)$ and target a circle \cite{palis2012geometric} or a single point \cite{daniilidis2010asymptotic}.

In our case $v$ is not smooth, but it is the value function of a control problem and solves an eikonal equation. These properties can be exploited
to prove  that the hitting time is finite in some cases. 
}

{ 
The first sufficient condition, that complements the hypothesis \eqref{eq: assumption for stab}, is the following, where 
    $d(x)\coloneqq \text{dist}(x,\mathfrak{M})$: }
\begin{itemize}
    \item there exists a continuous function $
    \tilde{\gamma}(s)>0$ for all $s>0$ and $\tilde{\gamma}(0)=0$, and some  $r>0$ such that
    \begin{equation}
    \label{eq: assumption L finite time}
    \tag{L}
    \ell(x)=\tilde{\gamma}(d(x)),\quad \forall\,x \,\text{ s.t. }\, d(x)\leq r .
    \end{equation}
\end{itemize}


\begin{prop}
\label{prop}
Assume { \eqref{elle}, \eqref{eq: assumption for stab}, }and \eqref{eq: assumption L finite time} hold, 
and $\a^*$ be an optimal control for problem \eqref {eq: control representation v}. 
Then the hitting time $t_{x}(\alpha^{*})=d(x)$ whenever $d(x)\leq r$ and it is finite for all $x$.
\end{prop}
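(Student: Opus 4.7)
The plan is to show that $v(x)=V(d(x))$ for $d(x)\le r$, where $V(\sigma):=\int_{0}^{\sigma}\tilde\gamma(\rho)\,d\rho$, and that the equality is realized only by trajectories moving at unit speed straight toward $\mathfrak{M}$; this immediately gives $t_x(\alpha^*)=d(x)$. The case $d(x)>r$ will then reduce to this via the asymptotic stability of $\mathfrak{M}$ established in Theorem \ref{thm: stability of M}.

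\textit{Upper and lower bounds on $v(x)$.} Picking a projection $\bar x\in\mathfrak{M}$ with $|x-\bar x|=d(x)$ (which exists since $\mathfrak{M}$ is closed and nonempty) and using the constant control $\alpha\equiv(\bar x-x)/d(x)$, one gets $d(y(s))=d(x)-s$ (by the triangle inequality and the $1$-Lipschitz continuity of $\text{dist}(\cdot,\mathfrak{M})$), the trajectory stays in $\{d\le r\}$, and by \eqref{eq: assumption L finite time} the cost equals $V(d(x))$, so $v(x)\le V(d(x))$. For the reverse inequality let $\alpha$ be any admissible control with finite hitting time $t$ and set $\phi(s):=d(y^\alpha_x(s))$, a $1$-Lipschitz function with $\phi(0)=d(x)$ and $\phi(t)=0$. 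If $\phi\le r$ throughout, the area formula for Lipschitz maps gives
\[
\int_0^{t}\ell(y(s))\,ds \;\geq\; \int_0^{t}\tilde\gamma(\phi(s))|\phi'(s)|\,ds \;=\; \int_{\R}\tilde\gamma(\sigma)\#\phi^{-1}(\sigma)\,d\sigma \;\geq\; V(d(x)),
\]
the last inequality using $\#\phi^{-1}(\sigma)\ge 1$ on $[0,d(x)]$ by the intermediate value theorem. If instead $\phi(s_0)>r$ for some $s_0$, set $\tau_1:=\inf\{s:\phi(s)>r\}$ and $\tau_2:=\sup\{s:\phi(s)>r\}$; then $\phi(\tau_1)=\phi(\tau_2)=r$ and $\phi\le r$ on both $[0,\tau_1]$ and $[\tau_2,t]$, so the same co-area argument gives contributions of at least $V(r)-V(d(x))$ and $V(r)$ respectively, while on $(\tau_1,\tau_2)$ assumption \eqref{eq: assumption for stab} yields $\ell\ge \gamma(r)>0$ on an open set, making the middle contribution strictly positive. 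Since $V$ is nondecreasing and $d(x)\le r$, the total is at least $2V(r)-V(d(x))\ge V(d(x))$. Combining with the upper bound, $v(x)=V(d(x))$.

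\textit{Hitting time for any optimal control.} If $\alpha^*$ is optimal, all the preceding inequalities must be equalities. The second case is excluded, as its middle contribution would have to vanish, contradicting the strict positivity just noted. In the first case, equality in the co-area step forces $|\phi'(s)|=1$ a.e.\ on $\{\phi>0\}$ and $\#\phi^{-1}(\sigma)=1$ for a.e.\ $\sigma\in(0,d(x))$; together with $\phi(0)=d(x)$ and $\phi(t)=0$, this forces $\phi(s)=d(x)-s$ and hence $t_x(\alpha^*)=d(x)$. For $d(x)>r$, Theorem \ref{thm: stability of M}(ii) yields $d(y^*(t))\to 0$, so by continuity there is a first $\tau>0$ with $d(y^*(\tau))=r$; the dynamic programming principle ensures $\alpha^*(\cdot+\tau)$ is optimal from $y^*(\tau)$, so by what was just proved its residual hitting time equals $r$ and $t_x(\alpha^*)=\tau+r<+\infty$.

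The main obstacle is the case where the trajectory escapes the ``nice'' region $\{d\le r\}$, since there \eqref{eq: assumption L finite time} no longer describes $\ell$. Assumption \eqref{eq: assumption for stab} compensates by providing a uniform positive lower bound on $\ell$ outside that set, and combining it with the co-area estimates on the two subintervals where (L) does apply is precisely what turns escape into a strictly suboptimal strategy.
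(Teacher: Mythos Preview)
Your approach differs substantially from the paper's and is in some ways stronger. The paper identifies $v$ with $V(d(\cdot))$ on $\{d\le r\}$ by invoking external viscosity--solution machinery: the upper optimality principle of Soravia and a uniqueness/comparison result of Malisoff for the auxiliary Dirichlet problem $|\nabla W|=\tilde\gamma(d(x))$. Having done so, it only exhibits \emph{one} optimal trajectory---the straight segment toward a nearest point of $\mathfrak{M}$---whose hitting time is $d(x)$. Your co-area (Banach indicatrix) argument is fully elementary, avoids those citations, and actually proves more: equality in your chain of inequalities forces $|\phi'|=1$ a.e.\ and $\#\phi^{-1}(\sigma)=1$ a.e.\ on $(0,d(x))$, hence (via the indicatrix identity $\int_0^t|\phi'|=\int\#\phi^{-1}(\sigma)\,d\sigma$) $t=d(x)$ and $\phi(s)=d(x)-s$ for \emph{every} optimal control, which is precisely what the proposition asserts.

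There is one genuine gap: you restrict throughout to controls with \emph{finite} hitting time. This leaves two holes. First, the lower bound $v(x)\ge V(d(x))$ is unproved for competitors with $t_x(\alpha)=+\infty$; second, you have not excluded the possibility that an optimal $\alpha^*$ itself has $t_x(\alpha^*)=+\infty$ (your equality analysis uses $\phi(t)=0$). Both are easy to close with your own tools. For the lower bound with $t_x(\alpha)=+\infty$ and $\phi\le r$ throughout: either $\liminf_{s\to\infty}\phi(s)>0$, in which case $\tilde\gamma\circ\phi$ is bounded below on a compact subset of $(0,r]$ and the cost is infinite; or $\liminf\phi=0$, and applying your co-area estimate on $[0,T]$ with $\phi(T)$ small and letting $T\to\infty$ gives cost $\ge V(d(x))$. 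The escaping case is handled the same way. For an optimal $\alpha^*$ with $t_x(\alpha^*)=+\infty$: the dynamic programming identity $v(y^*(s))=v(x)-\int_0^s\ell(y^*)$ forces $v(y^*(s))\to 0$, hence $\phi(s)\to 0$; rerunning your equality analysis on $[0,\infty)$ yields $|\phi'|=1$ a.e.\ and $\int_0^\infty|\phi'|\,ds=\int_0^{d(x)}\#\phi^{-1}(\sigma)\,d\sigma=d(x)<\infty$, a contradiction. With these two additions the argument is complete.
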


\begin{proof}
Let us first note that the finiteness for all $x$ follows from the property in the case $d(x)\leq r$,
because by Theorem \ref{thm: stability of M} \textit{(ii)} there exists a finite time $\widetilde{t}_{x}$ such that $d(y_{x}^{\alpha^{*}}(\widetilde{t}_{x})) \leq r$. 

We 
 assume 
 that the initial position $x$ satisfies $d(x)\leq r$
and aim  to prove that
\begin{equation}
    \label{eq: value function - proof finite time}
     v(x) = \int_{0}^{d(x)} \tilde{\gamma}(s)\,\text{d}s ,
\end{equation}
where  $v(x)$ is the value function defined in \eqref{eq: control representation v}.
Denote by $V(x)$ the right-hand side of the last equality. 

We first claim that $v(x)\leq V(x)$. Take $z$ is in the set of projections of $x$ onto $\mathfrak{M}$ and 
consider the straight line from $x$ to $z$ 
given by the trajectory $\overline{y}_{x}(t) = x - pt$, $t\geq 0$, where $p =\frac{x-z}{|x-z|}$.
Note that $\overline{t}_{x}\coloneqq \inf\{t\geq 0\,:\; \overline{y}_{x}(s)\in \mathfrak{M}\} 
 = d(x)$, and 
  that $d(x-pt)\leq r$ for all $0\leq t\leq \overline{t}_{x}$. Then,  by \eqref{eq: assumption L finite time},
\begin{equation*}
    v(x) \leq \int_{0}^{\overline{t}_{x}}\ell(\overline{y}_{x}(t))\,\text{d}t = \int_{0}^{\overline{t}_{x}}\tilde{\gamma}(d(\overline{y}_{x}(t)))\,\text{d}t =: J(x).
\end{equation*}
Observe now that $d(\overline{y}_{x}(t)) = \big| |x-z| - t \big|=d(x) - t$. 
 Therefore, using the change of variable $s\coloneqq d(\overline{y}_{x}(t))=d(x)-t$, we obtain
\begin{equation*}
    J(x) =  \int_{0}^{d(x)}\tilde{\gamma}(d(\overline{y}_{x}(t)))\,\text{d}t = \int_{0}^{d(x)}\tilde{\gamma}(s)\,\text{d}s = V(x)
\end{equation*}
and this proves the claim. 

Next we show that $v(x)\geq V(x)$. 
Since $v(x)$ is a continuous viscosity solution to \eqref{eq: eikonal - ell}, then using \cite[Theorem 3.2 (ii)]{soravia1999optimality} it satisfies the upper optimality principle \cite[Definition 3.1]{soravia1999optimality}, that is,
\begin{equation*}
    v(x) \geq \inf\limits_{\alpha}
     \int_{0}^{t}\ell(y_{x}^{\alpha}(s))\,\text{d}s + v(y_{x}^{\alpha}(t)),\quad \forall t\geq 0,
\end{equation*}
where the dynamics of $y_{x}^{\alpha}(\cdot)$ is again \eqref{eq: dynamics eikonal} with $|\a (s)|\leq 1$. Using 
 \eqref{eq: assumption L finite time} and  
  $v\geq 0$ we get 
\begin{equation*}
    v(x) \geq \inf\limits_{\alpha}
     \int_{0}^{t}\tilde{\gamma}(d(y_{x}^{\alpha}(s)))\,\text{d}s,\quad \forall t\geq 0.
\end{equation*}
In particular, 
since $\tilde{\gamma}(s) = 0$ if and only if $s=0$, 
we have
\begin{equation*}
    v(x) \geq \inf\limits_{\alpha \in B(0,1)} \int_{0}^{t_{x}(\alpha)}\tilde{\gamma}(d(y_{x}^{\alpha}(s)))\,\text{d}s\;=:\,W(x) .
\end{equation*}
Then the  function $W(x)$ solves in the viscosity sense 
the Dirichlet problem
\begin{equation}
\left\{\quad
\begin{aligned}
     |\nabla W(x)| &= \tilde{\gamma}(d(x)), & x\in \mathds{R}^{n}\setminus \mathfrak{M}\\
     W(x) &= 0, & x\in \mathfrak{M}.
\end{aligned}\right.
\end{equation}
But $V(x)\coloneqq\int_{0}^{d(x)} \tilde{\gamma}(s)\,\text{d}s$ is also a viscosity solution of this Dirichlet problem because $|D^{\pm}V(x)|=|D^{\pm}d(x)|\tilde{\gamma}(d(x))$.
We conclude using 
\cite[Theorem 1 and Remark 3.1]{malisoff2004bounded} that $V(x)=W(x)$ and hence $v(x)\geq V(x)$.

Finally we use in the integral of the formula \eqref{eq: value function - proof finite time} the same change of variable as 
above to get
\begin{equation*}
    v(x) = \int_{0}^{d(x)}\tilde{\gamma}(d(\overline{y}_{x}(t)))\,\text{d}t = \int_{0}^{d(x)}\ell(\overline{y}_{x}(t))\,\text{d}t .
\end{equation*}
This proves that $\overline{y}_{x}(t)\coloneqq x-pt$ is an optimal trajectory  and $d(x)$ is its hitting time. 
\end{proof}
{ \begin{rmk}
In some control problems it may happen  that an optimal trajectory remains arbitrarily close to a target 
 without ever reaching it. Such a behavior has been observed in 
a linear-quadratic control problem studied in \cite[\S 6.1]{kouhkouh2018dynamic} with the target 
is a singleton $\{x_{\circ}\}$ and the time $t_{\varepsilon}$ of being $\varepsilon$-close to $x_{\circ}$ is shown to be 
$t_{\varepsilon}=C\,\ln\left(\frac{|x-x_{\circ}|}{\varepsilon}\right)$, where $x$ is  the initial state.
Moreover, 
 an optimal trajectory oscillates periodically around $x_{\circ}$ (see \cite[p. 55]{kouhkouh2018dynamic}).
\end{rmk}
}


{ 
Next we show that, under the set of assumptions of Section \ref{sec: asymptotics}, a bound from below on $\ell$ near the target is a sufficient condition for the finite hitting time. The proof uses an inequality of  \L ojasiewicz type along optimal gradient orbits.

\begin{thm} 
Assume $\ell(x)=\sqrt{ 2(f(x) - \underline{f}) }$, 
 (A), (B), and (H) are satisfied, and for some $c, r>0$, $0<\beta<3/2$,
  \begin{equation}
 \label{lbelow}
  \ell(x) \geq c\, d(x)^\beta ,\quad \forall\,x \,\text{ s.t. }\, d(x)\leq r .
 \end{equation}
 If $\a^*$ is   an optimal control  for  $x$, then the hitting time $t_{x}(\alpha^{*})$ 
  is finite for all $x$, and for $d(x)$ sufficiently small 
   \begin{equation}
 \label{est-t}
 t_{x}(\alpha^{*})\leq     \frac C{1- 2\beta/3}\, d(x)^{\frac 32 - \beta}  .
   \end{equation} 
\end{thm}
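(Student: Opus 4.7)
The plan is to combine the lower bound \eqref{lbelow} with an upper bound on $v$ in terms of $d(x):=\text{dist}(x,\M)$ to derive, along an optimal trajectory, a scalar differential inequality of \L ojasiewicz type for $w(s):=v(y^*(s))$, and then to compare it with an explicit ODE with exponent $<1$ that reaches $0$ in finite time.

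\emph{Step 1 (upper bound on $v$ near $\M$).} First I would show $v(x)\le \tilde C\, d(x)^{3/2}$ for $x$ with $d(x)$ small. Since $f$ is $C_1$-Lipschitz by (B1) and $f(\bar x)=\uf$ at any projection $\bar x$ of $x$ on $\M$,
\begin{equation*}
\ell(x)=\sqrt{2(f(x)-\uf)}\;\le\;\sqrt{2C_1\,d(x)} .
\end{equation*}
Plugging the straight-line admissible control $\a(s)\equiv -p$ with $p=(x-\bar x)/d(x)$ into the representation \eqref{eq: control representation v}, the trajectory $\bar y(s)=x-sp$ satisfies $d(\bar y(s))\le d(x)-s$ for $s\in[0,d(x)]$, so
\begin{equation*}
v(x)\;\le\;\int_0^{d(x)}\ell(\bar y(s))\,ds\;\le\;\sqrt{2C_1}\int_0^{d(x)}(d(x)-s)^{1/2}\,ds\;=\;\tilde C\,d(x)^{3/2},
\end{equation*}
with $\tilde C=\tfrac{2}{3}\sqrt{2C_1}$. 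Inverting gives $d(y^*(s))\ge\tilde C^{-2/3}w(s)^{2/3}$ whenever $d(y^*(s))$ is small.

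\emph{Step 2 (differential inequality for $w$).} Under (A)--(B), Lemmas \ref{lem: semiconcavity - lambda}--\ref{lem: semiconcavity} and Theorem \ref{ode}(I.iv) give that $v$ is differentiable along $y^*$ with $\dot y^*=-Dv(y^*)/|Dv(y^*)|$ and $|Dv(y^*)|=\ell(y^*)$. Safer and enough: the dynamic programming identity on the optimal tail reads
\begin{equation*}
w(s_1)-w(s_2)\;=\;\int_{s_1}^{s_2}\ell(y^*(\tau))\,d\tau,\qquad 0\le s_1\le s_2<t_x(\a^*),
\end{equation*}
so $w$ is absolutely continuous with $w'(s)=-\ell(y^*(s))$ a.e. Using \eqref{lbelow} when $d(y^*(s))\le r$ and the inverted bound from Step 1,
\begin{equation*}
w'(s)\;\le\;-c\,d(y^*(s))^{\beta}\;\le\;-c'\,w(s)^{\mu},\qquad \mu:=\tfrac{2\beta}{3}<1 .
\end{equation*}

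\emph{Step 3 (ODE comparison and conclusion).} Assume $d(x)$ small enough that Lyapunov stability (Theorem \ref{thm: stability of M}(i)) keeps $y^*$ in $\{d\le r\}$. Comparing with the ODE $\eta'=-c'\eta^{\mu}$, $\eta(0)=v(x)$, gives $w(s)\le\eta(s)=[(v(x)^{1-\mu}-c'(1-\mu)s)_+]^{1/(1-\mu)}$, so $w(s)=0$, i.e.\ $y^*(s)\in\M$, for $s\ge v(x)^{1-\mu}/(c'(1-\mu))$. Substituting $v(x)\le\tilde C d(x)^{3/2}$ and using $(3/2)(1-\mu)=3/2-\beta$ yields the bound \eqref{est-t}. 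For general $x$, Theorem \ref{thm: stability of M}(ii) gives $y^*(s)\to\M$, hence $y^*$ enters any fixed neighborhood $\{d\le\eta\}$ on which Step 3 applies, in a finite (though not explicitly bounded) time $T_x$; applying Step 3 from $y^*(T_x)$ yields $t_x(\a^*)<\infty$.

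\emph{Main obstacle.} The delicate point is not the ODE comparison (once exponent $\mu<1$ is isolated, the finite-time extinction is textbook) but the differential identity $w'=-\ell\circ y^*$ a.e., since $v$ is merely semiconcave and the gradient descent \eqref{grad_desc} uses selections from $D^{-}v$. This is why I favor the dynamic-programming derivation in Step 2, which produces an absolutely continuous integral identity and sidesteps pointwise differentiation of $v$. A secondary concern is confining $y^*$ to $\{d\le r\}$ throughout Step 3; the cleanest fix is to invoke Lyapunov stability from Theorem \ref{thm: stability of M}(i), so that ``$d(x)$ sufficiently small'' in the statement of the theorem is chosen below the stability threshold associated to $r$.
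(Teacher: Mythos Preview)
Your argument is correct and follows the same overall strategy as the paper: derive the upper bound $v(x)\le \tilde C\,d(x)^{3/2}$ from the Lipschitz estimate $\ell\le\sqrt{2C_1\,d}$ and the straight-line competitor, combine it with the hypothesis $\ell\ge c\,d^\beta$ to obtain a \L ojasiewicz-type inequality with exponent $2\beta/3<1$, and integrate.

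The implementation differs slightly. The paper reparametrises the optimal trajectory to an actual gradient orbit $\dot z=-Dv(z)$ (using the differentiability of $v$ along $y^*$ from Theorem~\ref{ode}\,(I.iv)), writes $t_x(\alpha^*)$ as the length of $z(\cdot)$, and then runs the classical \L ojasiewicz length estimate $\tfrac{d}{ds}v(z)^{1-\rho}\le -C(1-\rho)$. You instead stay with the normalised trajectory $y^*$ and use the dynamic-programming identity to get $w'=-\ell(y^*)$ directly, then compare with the scalar ODE $\eta'=-c'\eta^{\mu}$. Your route is a touch more elementary: it sidesteps the reparametrisation and the pointwise differentiability of $v$, relying only on the integral DPP identity, which is indeed the more robust formulation. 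The paper's route, on the other hand, makes the connection to the classical gradient-orbit length problem explicit. Your handling of the confinement $d(y^*)\le r$ via Lyapunov stability (Theorem~\ref{thm: stability of M}\,(i)) for small $d(x)$, and asymptotic stability for general $x$, is also slightly more explicit than the paper's ``it is not restrictive to assume'' reduction, but the content is the same.
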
 
\begin{proof}
Set $y(t):= y_x^{\alpha^{*}}(t)$ and recall from Theorem \ref{thm: stability of M} that
 \begin{equation*}
\lim_{t\to t_{x}(\alpha^{*})} d(y(t)) = 0.
 \end{equation*}
Therefore it is not restrictive to assume that $d(y(t))\leq r$ for all $t>0$.

We re-parametrise the trajectory $y$ to get a gradient orbit.
Set  
$$
s(t):=\int_0^t|Dv(y(\tau))|^{-1} d\tau \in [0, T) , \quad 0\leq t < t_{x}(\alpha^{*}),
$$
 where $T\leq +\infty$. Define $s\mapsto t(s)$, $[0, T)\to [0, t_{x}(\alpha^{*}))$,   the inverse  function of $s(t)$ and $z(s):= y(t(s))$. Then
\[
\dot z(s) = - Dv(z(s)) , \quad z(0)=x , \quad \lim_{s\to T}  d(z(s)) = 0 ,
\]
and 
\[
t(s)=\int_0^s |Dv(z(\tau))| d\tau =  \int_0^s |\dot z(\tau)|d\tau .
\]
Therefore 
\[
t_{x}(\alpha^{*})=\lim_{s\to T} t(s) =  \int_0^{T} |\dot z(\tau)|d\tau ,
\]
and so    $t_{x}(\alpha^{*})<\infty$ if the length of the gradient orbit $z(\cdot)$ is finite. By Theorem \ref{ode}, $v$ is differentiable at all points $z(s)$, $s>0$, and then
 \begin{equation}
 \label{half L}
|Dv(z(s))|= \ell(z(s)) \geq c d(z(s))^\beta , \quad \forall s>0 
 \end{equation}
by \eqref{lbelow} and $d(z(s))\leq r$. On the other hand, by assumptions (A2) and (B1), for some $C_3>0$
\[
\ell(x)\leq C_3 \sqrt{d(x)} .
\]
 By repeating the 1st half of the proof of Proposition \ref{prop} we get
   \begin{equation}
 \label{est-v}
 v(x)\leq  \int_{0}^{d(x)} C_3 \sqrt s\,\text{d}s = \frac {2C_3}{3} d(x)^{3/2} .
 \end{equation}
By combining this with \eqref{half L} we obtain
\[
|Dv(z(s))| \geq C_4 v(z(s))^\rho ,
\]
where $\rho:= 2\beta/3 <1$. This is a \L ojasiewicz inequality along the gradient orbit $z(\cdot)$, and we can use the following classical argument:
\[
\frac{-1}{1-\rho} \frac d{ds}[v(z(s))^{1-\rho}] = \frac {-Dv(z(s))\cdot \dot z(s)}{v(z(s))^{\rho}} = \frac {|Dv(z(s))| |\dot z(s)|}{v(z(s))^{\rho}} \geq C_4 |\dot z(s)| ,
\]
which integrated from $0$ to $T$ gives 
\[
t_{x}(\alpha^{*}) \leq \frac{v(x)^{1-\rho}}{C_4(1-\rho)} .
\]
Now we combine this with \eqref {est-v} to get the estimate   \eqref {est-t}.
\end{proof}

}

{\subsection*{Acknowledgement} The authors wish to thank Piermarco Cannarsa and Olivier Ley for useful conversations and the referees for their careful reading and insightful remarks.
}

\bibliography{bibliography}
\bibliographystyle{siam}
\end{document}